\newcommand{\abs}[1]{\left\vert#1\right\vert}
\newcommand{\E}[1]{\mathbb{E}#1}
\newcommand{\R}[1]{\mathbb{R}#1}
\newcommand{\U}[1]{\mathbb{U}#1}
\newcommand{\var}[1]{\mathrm{Var}(#1)}
\newcommand{\I}[1]{\mathbb{I}\left\{#1\right\}}
\newcommand{\hmu}[1]{\hat{\mu}#1}
\newcommand{\condhmu}[1]{{\hat{\mu}}_{\mathrm{cmc}}#1}
\newcommand{\csthmu}[1]{{\hat{\mu}}_{\mathrm{sm}}#1}
\newcommand{\simiid}{\stackrel{\mathrm{iid}}\sim}
\newcommand{\mrd}{\,\mathrm{d}}
\newcommand{\trans}{\top}
\newcommand{\Rmnum}[1]{\expandafter\@slowromancap\romannumeral #1@}
\newtheorem{lemma}{Lemma}
\newtheorem{theorem}{Theorem}
\theoremstyle{definition}
\newtheorem{example}{Example}
\newtheorem{remark}{Remark}
\newtheorem{definition}{Definition}
\begin{document}

\title{An integrated quasi-Monte Carlo method for handling high dimensional problems with discontinuities in financial engineering}

\author{Zhijian He\\hezhijian87@gmail.com \and Xiaoqun Wang \\xwang@math.tsinghua.edu.cn}
\maketitle

%
%

\begin{abstract}
Quasi-Monte Carlo (QMC) method is a useful numerical tool for pricing and hedging of complex financial derivatives. These problems are usually of high dimensionality and discontinuities. The two factors may significantly deteriorate the performance of the QMC method. This paper develops an integrated method that overcomes  the challenges of the high dimensionality and discontinuities concurrently. For this purpose, a smoothing method is proposed to remove the discontinuities for some typical functions arising from financial engineering. To make the smoothing method applicable for more general functions, a new path generation method is designed for simulating the paths of the underlying assets such that the resulting function has the required form. The new path generation method has an additional power to reduce the effective dimension of the target function. Our proposed method caters for a large variety of model specifications,  including  the Black-Scholes, exponential normal inverse Gaussian L\'evy,  and Heston models.  Numerical experiments dealing with these models show that in the QMC setting the proposed smoothing method in combination with the new path generation method can lead to a dramatic variance reduction for pricing exotic options with discontinuous payoffs and for calculating options' Greeks. The investigation on the effective dimension and the related characteristics explains the significant enhancement of the combined procedure.

\smallskip

\noindent \textbf{Keywords:}
simulation, option pricing, quasi-Monte Carlo methods, smoothing, dimension reduction

\noindent \textbf{MSC 2010:} 65C05, 65D30, 91G20, 91G60

\end{abstract}

\section{Introduction}
The Monte Carlo (MC) and the quasi-Monte Carlo (QMC) methods are important numerical tools in the problems of pricing and hedging of
complex financial derivatives \citep[][]{Glasserman2004,lecu:2009,lemieux:2009}. The prices of financial derivatives can be expressed as mathematical expectations of their discounted payoffs with respect to the risk-neutral measure. Hedging portfolios are often constructed from the sensitivities (or Greeks) of the financial derivatives. After some suitable transformations, these problems
can be formulated as integrals over the $d$-dimensional unit cube $(0,1)^d$
\begin{equation}\label{If}
	I(h)=\int_{(0,1)^d}h(\bm{u})\mrd\bm{u},
\end{equation}
with the dimension in hundreds or thousands. For instance, the problem of evaluating mortgage-back securities can be formulated as a high-dimensional integral with a dimension $d$ up to 360 in the examples of \cite{Caflisch1997}. In most cases, the integral~\eqref{If} cannot be calculated analytically and has to be approximated by the MC or QMC method.
QMC has the potential to accelerate the convergence rate of MC. The QMC method estimates the integral~\eqref{If} via
\begin{equation}\label{MCappro}
	Q_{N}(h)=\frac{1}{N}\sum_{i=1}^Nh(\bm{u}_i),
\end{equation}
where $\bm{u}_i\in(0,1)^d$ are deterministic and more uniformly distributed points known as \emph{low discrepancy points} instead of plain pseudo-random points used in MC. The well-known Koksma-Hlawka inequality guarantees that the QMC method
yields a deterministic error bound $O(N^{-1}(\log N)^d)$ for functions of finite variation in the sense of Hardy and Krause \citep[][]{Niederreiter1992}. 

High dimensionality and discontinuities are two key factors that may deteriorate the performance of QMC as well as some other numerical methods, e.g., trapezoidal rules and sparse grid quadratures \citep[][]{holt:2011}.
To overcome the challenge of high dimensionality arising from finance, some path generation methods (PGMs) have been proposed to reduce the effective dimension of the payoffs, since the QMC method favors the problems with low effective dimension \citep[][]{Caflisch1997,Wang2003}.  Particularly, \cite{Imai2006} proposed the linear transformation (LT) method that aims at minimizing the effective dimension of the target function. On the other hand, 
\cite{Wang2013} found that PGMs have a strong impact on the discontinuity structure and proposed the orthogonal
transformation method that realigns the discontinuities to be parallel to the coordinate axes (such discontinuities are referred as \emph{QMC-friendly} since with such discontinuities good performance can still be expected for QMC). In this way, the difficulty of discontinuities is  partially overcome. Subsequently, \cite{he:wang:2014}
developed a more general PGM (called the QR method) based on the QR decomposition  of a matrix \citep[][]{Golub2012} that can deal with multiple discontinuity structures by proper realignments. Moreover, it enjoys the effect of dimension reduction. To enhance the effect of dimension reduction further, \cite{imai:tan:2014} proposed to integrate the LT method and the orthogonal
transformation method. 

Although the discontinuities can be realigned to be QMC-friendly by some proper PGMs, discontinuities are still involved
in the resulting function which may  more or less dampen the efficiency of QMC.  One natural question is whether we can remove the discontinuities
completely to improve the smoothness of the function. To this end, \cite{wang:2015} proposed a smoothing method to remove the discontinuities.  The smoothing method works with simple discontinuous functions of the form $f(\bm{u})\I{\Gamma_1<u_1<\Gamma_2}$, where $f(\cdot)$ is a smooth  function, $\I{\cdot}$ is an indicator function, $u_1$ is the first entry of the vector $\bm{u}$, and $\Gamma_1$ and $\Gamma_2$ are constants.
However, this restriction rules out discontinuous payoffs of many commonly traded options, e.g., binary Asian options and barrier options. Moreover, a specific PGM is required to translate the payoffs into this form as shown in the examples of \cite{wang:2015}.  Some smoothing methods are widely used in  MC, but for different purposes \citep[][]{Glasserman2004,Liu2011}.

For high-dimensional problems with discontinuous functions, special methods are required to ensure the faster convergence of QMC. Our aim is to develop new methods to overcome the challenges of high dimensionality and discontinuities in financial engineering. For this purpose, we propose a two-step procedure that generalizes the procedure in \cite{wang:2015} in both steps. In the first step, a good PGM is designed to transform the function to a form such that the structure of the discontinuity is simplified and the effective dimension is reduced. In the second step, a smoothing method is proposed to remove the discontinuities completely. The two-step  procedure has the advantage of
removing the discontinuities and reducing the effective dimension. It can be applied to the pricing and hedging of  commonly traded options with discontinuous payoffs, such as binary Asian options and barrier options. Comparing to the method proposed by \cite{wang:2015}, our  method has a wider scope that tailors many classes of contracts and more general models including some exponential  L\'evy models \citep[see, e.g.,][]{tank:2003} and the Heston model \citep[][]{heston:1993}.

The remainder of this paper is organized as follows. 
In Section \ref{sec:CST}, a new smoothing method is developed.
In Section \ref{sec:cstFP}, we show how to apply the proposed smoothing method to problems with typical payoffs under a general framework.
We then propose a dimension reduction method adapted to the smoothing method
for some problems. In Section \ref{sec:numerical}, extensive numerical experiments are performed on pricing some exotic
options and calculating their Greeks under the Black-Scholes model and the exponential normal inverse Gaussian (NIG) model. In Section~\ref{sec:hestonModel}, we discuss the generalization of the proposed method to the Heston framework. Conclusions
are presented in Section \ref{SectionCon}. The concept of effective dimension is deferred to the appendix. 

We conclude this section by  citing some related works based on the Fourier transform in computational finance. As an alternative to MC and QMC, the Fourier transform is employed to the pricing and  hedging of options \citep[][]{ball:2015,fang:2008,fusai:2016}.
In our numerical experiments, we focus on comparing the proposed  method with some existing QMC methods to evaluate the quality of our strategy in the QMC literature. The comparisons with other branches of alternatives are interesting topics for further research.

\section{A New Smoothing Method}\label{sec:CST}
\subsection{Problem Formulation}
Consider the problem of pricing or hedging a path-dependent financial derivative based on asset prices in discrete times $\bm{S}:=(S_1,\dots,S_m)^{\trans}$, where $S_i:=S(t_i)$ denotes the price at the time $t_i$ and $m$ is the number of time steps. For
simplicity, we assume that the asset prices are observed at equally spaced
times, i.e., $t_i=i\Delta t$, where $\Delta t=T/m$ and $T$ is the maturity of the financial derivative. Under the risk-neutral measure, the price and the sensitivities of the financial derivative can often
be expressed as an expectation $\E[g(\bm{S})]$ for a real function $g(\cdot)$ over $\R^m$. Many functions in the pricing and hedging of financial derivatives  can be expressed in the form
\begin{equation}\label{Payoff}
g(\bm{S})=f(\bm{S})\I{q(\bm{S})>0}.
\end{equation}
The two functions $f(\bm{S})$ and $q(\bm{S})$ are usually differentiable almost everywhere. For pricing financial options, the factor $f(\bm{S})$
determines the magnitude of the payoff and $q(\bm{S})>0$ gives the payout condition. For calculating Greeks by the pathwise method \citep[][]{Glasserman2004},
the target function often involves an indicator function as in \eqref{Payoff} even though the underlying payoff is continuous. In this paper, we focus on discontinuous functions of the form~\eqref{Payoff}; see Section~\ref{sec:numerical} for some examples. 

We assume that under the risk-neutral measure the asset prices $\bm{S}$ can be generated by a uniform variate $\bm{u}:=(u_1,\dots,u_d)^\top$ in the unit cube $(0,1)^d$. This implies that $S_i$ can be viewed as a function of $\bm{u}$, say, $F_i(\bm{u})$. Let $F(\bm u)=(F_1(\bm u),\dots,F_m(\bm u))^\top$. Such a mapping $F(\bm u):(0,1)^d\to \R^m$ corresponds to a PGM of the underlying asset process.  Note that the nominal dimension of the mapping is $d$.  It may differ from the number of  time steps $m$ in some situations. As we will see, $d=m$ for the Black-Scholes model, while $d=2m$ for the Heston model.
Our problem is to estimate the expectation
\begin{equation}\label{problem}
	\mu=\E[g(\bm{S})]=\E[g(F(\bm{u}))],\ \bm{u}\sim  \U((0,1)^d).
\end{equation}

It is important to note that the mapping $F(\bm{u})$ in \eqref{problem} is not unique as long as it follows the law of $\bm S$. The mapping $F(\bm{u})$ does not affect the efficiency of the MC method since it does not change the variance of the integrand. However, it may have a significant impact on the performance of the QMC method since it could change the nature of any discontinuity and the effective dimension of the function. How  to design properly a mapping (or PGM) for QMC has become one of the most urgent tasks in this area.

\subsection{The Variable Push-Out Method}
Now we develop a new smoothing method aimed at removing the discontinuities completely and thus improving the smoothness of the target function~\eqref{Payoff}. By taking a transformation $\bm{S}=F(\bm{u})$ in \eqref{Payoff}, we obtain
\begin{equation}\label{Payofftrans}
h(\bm{u}):=g(F(\bm{u}))=f(F(\bm{u}))\I{q(F(\bm{u}))>0}.
\end{equation} 
We introduce the definition of \emph{variable separation condition} which is required for the setup of the proposed smoothing method (suppose that $d>1$).

\begin{definition}\label{Assumption}
	
	The indicator function in \eqref{Payofftrans} is said to satisfy the variable separation condition if there exist two functions $\Gamma_1(\cdot)$ and $\Gamma_2(\cdot)$ depending on $\bm{u}_{2{:}d}=(u_2,\dots,u_d)^{\trans}$ and  $0\le\Gamma_1(\bm{u}_{2{:}d})\le\Gamma_2(\bm{u}_{2{:}d})\le1$ such that
	$\{q(F(\bm{u}))>0\}$  is equivalent to $$\{\Gamma_1(\bm{u}_{2{:}d})<u_1<\Gamma_2(\bm{u}_{2{:}d})\}.$$
\end{definition}

We sometimes abbreviate $\Gamma_1:=\Gamma_1(\bm{u}_{2{:}d})$ and $\Gamma_2:=\Gamma_2(\bm{u}_{2{:}d})$, and assume that $\Gamma_1$ and $\Gamma_2$ are continuous. To carry out our smoothing method, we assume that the indicator function in \eqref{Payofftrans} satisfies the variable separation condition, implying that the function $h(\bm{u})$ in \eqref{Payofftrans} can be written as
\begin{equation}
\label{specialForm}
h(\bm{u})=f(F(\bm{u}))\I{\Gamma_1(\bm{u}_{2{:}d})<u_1<\Gamma_2(\bm{u}_{2{:}d})}.
\end{equation}
The variable separation condition seems restrictive as it depends on both the model of the price dynamics (which determines the mapping $F$) and the payoff (which determines the function $q(\cdot)$ in \eqref{Payofftrans}).
In the next section, we show that the variable separation condition is a naturally occurring condition for options that are frequently traded
in financial markets under a general framework.

The MC approximation of $\mu=\E[h(\bm{u})]$ is given by
\begin{equation}\label{MCestimate}
	\hmu = \frac{1}{N} \sum_{i=1}^N h(\bm{u}_i),\ \bm{u}_i\simiid \U((0,1)^d).
\end{equation}
Given $\bm{v}\in(0,1)^{d-1}$, denote the following conditional expectation as
\begin{equation}\label{eq:condE}
	c(\bm{v}):=\E[h(\bm{u})|\bm{u}_{2{:}d}=\bm{v}].
\end{equation}
The law of total expectation admits
$\mu = \E[c(\bm{u}_{2{:}d})].$
Assume that $c(\bm{v})$ can be calculated analytically for each $\bm{v}\in (0,1)^{d-1}$. This leads to a conditional MC estimate of $\mu$:
\begin{equation}\label{CMCestimate}
	\condhmu = \frac{1}{N} \sum_{i=1}^N c(\bm{v}_i),\ \bm{v}_i\simiid \U((0,1)^{d-1}).
\end{equation}
It is well-known that the conditional MC estimator \eqref{CMCestimate} has a variance no larger than the plain MC
estimator \eqref{MCestimate}. This is justified by  the elementary equality $\var{h(\bm{u})}=\var{c(\bm{u}_{2{:}d})}+\E[\var{h(\bm{u})|\bm{u}_{2{:}d}}].$
The main requirement for conditional MC is that we must be able to compute the conditional expectation $c(\bm{v})$ in \eqref{eq:condE} analytically, which is hard in many cases. In such cases, the conditional MC and its QMC version can be difficult to use. 

To remedy this, we generalize the smoothing
method of \cite{wang:2015} to remove the discontinuities in the function $h(\bm{u})$ given in \eqref{specialForm}. By taking the transformation (for a fixed $\bm{u}_{2{:}d}\in(0,1)^{d-1}$)
\begin{equation}\label{eq:transformation}
	u_1=(\Gamma_2(\bm{u}_{2{:}d})-\Gamma_1(\bm{u}_{2{:}d}))\tilde{u}_1+\Gamma_1(\bm{u}_{2{:}d})=:Z(\tilde{u}_1,\bm{u}_{2{:}d}),
\end{equation}
we have
\begin{align}
	\mu &= \int_{(0,1)^{d-1}}\left(\int_0^1 f(F(\bm{u}))\I{\Gamma_1\le u_1\le \Gamma_2}\mrd u_1\right)\mrd \bm{u}_{2{:}d}\nonumber\\
	&=\int_{(0,1)^{d-1}}\left(\int_0^1 (\Gamma_2-\Gamma_1)f(F(Z(\tilde{u}_1,\bm{u}_{2{:}d}),\bm{u}_{2{:}d}))\I{0\le \tilde{u}_1\le 1}\mrd \tilde{u}_1\right)\mrd \bm{u}_{2{:}d}\notag\\
	&=\int_{(0,1)^{d}} (\Gamma_2-\Gamma_1)f(F(Z(\bm{u}),\bm{u}_{2{:}d}))\mrd \bm{u}.\label{conditioning}
\end{align}
Now let
\begin{equation}\label{smoothedFn}
	\tilde{h}(\bm{u})=(\Gamma_2-\Gamma_1)f(F(\tilde{\bm{u}})),
\end{equation}
where $\tilde{\bm{u}}=(\Gamma_1+(\Gamma_2-\Gamma_1)u_1,\bm{u}_{2{:}d})^{\trans}$,
it follows from \eqref{conditioning} that $\mu=\E[\tilde{h}(\bm{u})]$.
So one can estimate $\mu$ via
\begin{equation}\label{CSTestimate}
	\csthmu = \frac{1}{N} \sum_{i=1}^N \tilde{h}(\bm{u}_i).
\end{equation}

We have shown that by applying the variables transformation in \eqref{eq:transformation}, the variables in the vector $\bm{u}_{2{:}d}$ seem to be ``pushed out" from the indicator function (while the conditional MC integrates out the variables from the indicator function). So we refer to this the \emph{variable push-out} (VPO) smoothing method, and call $\csthmu$ the smoothed estimate.
\cite{Achtsis2013} used a similar idea to price barrier options under the LT method. Their motivation was to make the
sampling scheme compatible with the LT method. We generalize their idea to more general functions of the form \eqref{specialForm}.
Our motivation is to  smooth the integrand for improving the efficiency of QMC.
The following theorem generalizes the results in \cite{wang:2015}.
\begin{theorem}\label{ThmVarReduction}
	Suppose that $h(\bm{u})$ and $\tilde{h}(\bm{u})$ are given in \eqref{specialForm} and \eqref{smoothedFn}, respectively, where $\Gamma_1,\Gamma_2$, $F_i$ and $f$ are continuous functions, and $\bm{u}\sim \U((0,1)^d)$. Then $\tilde{h}(\bm{u})$ has the following properties:
	\begin{enumerate}[(1)]
		\item Continuity: $\tilde{h}(\bm{u})$ is a continuous function for $\bm{u}\in(0,1)^d$;
		\item Unbiasedness: $\E[\tilde{h}(\bm{u})]=\E[h(\bm{u})]$;
		\item Variance reduction: $\var{\tilde{h}(\bm{u})}\le c\var{h(\bm{u})}$,
		where 
		\begin{equation}\label{eq:vrf}
			c=\sup_{\bm{v}\in (0,1)^{d-1}}(\Gamma_2(\bm{v})-\Gamma_1(\bm{v})).
		\end{equation}
	\end{enumerate}
\end{theorem}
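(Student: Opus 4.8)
The plan is to verify the three properties in turn, treating (1) and (2) as short consequences of what has already been set up and reserving the real work for the variance bound (3).

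For continuity (1), I would argue that $\tilde{h}$ is built from continuous maps by composition and multiplication. The map $\bm{u}\mapsto\tilde{\bm{u}}$ is continuous on $(0,1)^d$: its last $d-1$ coordinates are the identity, and its first coordinate $\Gamma_1(\bm{u}_{2{:}d})+(\Gamma_2(\bm{u}_{2{:}d})-\Gamma_1(\bm{u}_{2{:}d}))u_1$ is continuous in $\bm{u}$ because $\Gamma_1,\Gamma_2$ are assumed continuous. Composing with the continuous mapping $F$ (each $F_i$ continuous) and then with the continuous $f$ shows $f(F(\tilde{\bm{u}}))$ is continuous, and multiplying by the continuous factor $\Gamma_2-\Gamma_1$ gives the claim. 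The conceptual point is that the indicator, which is the sole source of discontinuity in $h$, has been absorbed into the change of variables, so no jump survives.

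For unbiasedness (2) there is essentially nothing new to prove: the chain of equalities culminating in \eqref{conditioning} already yields $\E[\tilde{h}(\bm{u})]=\mu=\E[h(\bm{u})]$. I would simply restate that integrating $u_1$ first by Fubini and then applying the substitution \eqref{eq:transformation}, whose Jacobian $\Gamma_2-\Gamma_1$ maps $(0,1)$ onto $(\Gamma_1,\Gamma_2)$, converts $\E[h]$ into $\E[\tilde{h}]$.

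The substance lies in (3). Since (2) gives $\E[\tilde{h}]=\E[h]=\mu$, it suffices to compare second moments. Running the same conditioning-and-substitution argument on $\tilde{h}^2$ gives
\[
\E[\tilde{h}(\bm{u})^2]=\int_{(0,1)^{d-1}}(\Gamma_2-\Gamma_1)\left(\int_{\Gamma_1}^{\Gamma_2}f(F(w,\bm{u}_{2{:}d}))^2\mrd w\right)\mrd\bm{u}_{2{:}d},
\]
whereas the same reduction applied to $h^2$ (with no substitution needed, just renaming $u_1$ as $w$) gives
\[
\E[h(\bm{u})^2]=\int_{(0,1)^{d-1}}\left(\int_{\Gamma_1}^{\Gamma_2}f(F(w,\bm{u}_{2{:}d}))^2\mrd w\right)\mrd\bm{u}_{2{:}d}.
\]
Because the inner integral is nonnegative and $\Gamma_2(\bm{v})-\Gamma_1(\bm{v})\le c$ for every $\bm{v}$, bounding the surviving factor pointwise by $c$ yields $\E[\tilde{h}^2]\le c\,\E[h^2]$. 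Finally, invoking $c\le1$ (which holds since $0\le\Gamma_1\le\Gamma_2\le1$) together with $\mu^2\ge0$,
\[
\var{\tilde{h}}=\E[\tilde{h}^2]-\mu^2\le c\,\E[h^2]-\mu^2\le c(\E[h^2]-\mu^2)=c\,\var{h}.
\]

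The only genuinely delicate point is the bookkeeping in the change of variables for the second moment: squaring $\tilde{h}$ produces a factor $(\Gamma_2-\Gamma_1)^2$, while the substitution contributes a reciprocal Jacobian $1/(\Gamma_2-\Gamma_1)$, so that exactly one factor of $\Gamma_2-\Gamma_1$ remains to be bounded by $c$. The passage from the second-moment inequality to the variance inequality then hinges on the easily overlooked but essential observation that $c\le1$, which is what lets me replace $-\mu^2$ by $-c\mu^2$ without violating the inequality.
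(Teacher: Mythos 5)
Your proposal is correct and follows essentially the same route as the paper's own proof: continuity by composition of continuous maps, unbiasedness from the identity \eqref{conditioning}, and the variance bound by showing $\E[\tilde{h}^2]\le c\,\E[h^2]$ via the change of variables and then using $c\le 1$ to replace $-\mu^2$ by $-c\mu^2$. The bookkeeping you flag as delicate (the leftover single factor of $\Gamma_2-\Gamma_1$ after squaring and substituting) is exactly the step the paper carries out.
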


\begin{proof}
The continuity is due to the fact that $h_1,h_2$ and $f$ are continuous functions. The unbiasedness is directly obtained from  \eqref{conditioning}. 

The remaining task is to prove that $E[\tilde{h}(\bm{u})^2]\leq c\E[h(\bm{u})^2]$. Since $0\le \Gamma_2-\Gamma_1\le c$, where $c$ is given by \eqref{eq:vrf}, we have

\begin{align*}
	\E[\tilde{h}(\bm{u})^2]  &= \int_{(0,1)^{d}}(\Gamma_2-\Gamma_1)^2f(F(Z(\bm{u})),\bm{u}_{2{:}d})^2\mrd \bm{u}\\
	&\le c\int_{(0,1)^{d-1}}\int_0^1(\Gamma_2-\Gamma_1)f(F(Z(u_1,\bm{u}_{2{:}d}),\bm{u}_{2{:}d})^2\mrd u_1\mrd \bm{u}_{2{:}d}\\
	&=c\int_{(0,1)^{d-1}}\int_{\Gamma_1}^{\Gamma_2}f(F(u_1,\bm{u}_{2{:}d}))^2\mrd u_1\mrd \bm{u}_{2{:}d}=c\E[h(\bm{u})^2].
\end{align*}
Notice that $c\in[0,1]$. By the unbiasedness of $\tilde{h}(\bm{u})$ and the inequality above, we have
\begin{align*}
\var{\tilde{h}(\bm{u})}&=\E[\tilde{h}(\bm{u})^2]-(\E[h(\bm{u})])^2\\&\le c\E[h(\bm{u})^2]-c(\E[h(\bm{u})])^2=c\var{h(\bm{u})},
\end{align*} 
which completes the proof.
\end{proof}

Theorem \ref{ThmVarReduction} guarantees that the smoothed estimate \eqref{CSTestimate} is unbiased and has a variance no larger than
that of the crude MC. \cite{wang:2015} mainly focused on the special case $0\leq \Gamma_1(\bm{u}_{2{:}d})\equiv a<\Gamma_2(\bm{u}_{2{:}d})\equiv b\leq 1$ so that $c=b-a$. The VPO method thus reduces the variance  by at least a  factor $1/c$  compared to crude MC. One can get a great variance reduction when $a$ and $b$ are very close together, but many problems in finance (e.g., arithmetic Asian options) cannot lead to such a trivial case. We thus focus on the general case \eqref{specialForm}.

If the function $F(\bm{u})$ depends only on $\bm{u}_{2{:}d}$, the VPO smoothing method and the conditional MC lead to the same estimate since the conditional expectation in \eqref{eq:condE} has the closed form solution. In general, although the VPO smoothing method may yield a larger variance than the conditional MC, its advantage is that the transformed
integrand $\tilde{h}(\bm{u})$ can be obtained directly without computing any conditional expectations.  

Applying the VPO smoothing method in QMC is straightforward using the same form of the approximation \eqref{CSTestimate}. The unbiasedness is preserved in the context of randomized QMC \citep[][]{lecu:2002}. The nice property of the estimate is that the
function $\tilde{h}(\bm{u})$ does not involve any discontinuities. It is known that smoothness is a key factor affecting the performance of QMC. Better smoothness may yield a faster convergence rate of the QMC estimate.
This was confirmed by \cite{owen1997a}, who proved that the variance of randomized QMC is $O(N^{-3}(\log N)^{d-1})$ for smooth
integrands. That rate is much faster than the rate found for discontinuous functions in \cite{he:wang:2015}. We thus expect that the improved smoothness can increase the efficiency of QMC.

\begin{remark}\label{rm1}
	If $\{q(F(\bm{u}))>0\}$ in \eqref{Payofftrans} is equivalent to
	$\{u_1\leq \Gamma_1\}\cup\{u_1\geq \Gamma_2\}$, the VPO smoothing method is also applicable by recognizing that
	\begin{align}\label{Ext}
	h(\bm{u})&=f(F(\bm{u}))\I{u_1\leq \Gamma_1\}\cup\{u_1\geq\Gamma_2}\notag\\
	&=
	f(F(\bm{u}))-f(F(\bm{u}))\I{\Gamma_1< u_1< \Gamma_2}.
	\end{align}
	Notice that the function $f(F(\bm{u}))$ in  \eqref{Ext} is continuous. We just need to handle the second term
	in \eqref{Ext} by using the VPO smoothing method, resulting in a smooth function $\tilde{h}(\bm{u})=f(F(\bm{u}))-(\Gamma_2-\Gamma_1)f(F(\tilde{\bm{u}})).$
\end{remark}

\section{A Two-Step Procedure in Computational Finance}\label{sec:cstFP}
In this section, we focus on problems of pricing and hedging of financial derivatives in which the dynamic of the asset price $S(t)$ follows an exponential L\'evy model, defined by
\begin{equation}
S(t)=S_0\exp\{L(t)\},
\end{equation}
where $L(t)$ is a L\'evy process, a stochastic process with $L(0)=0$ and independent and identically distributed (i.i.d.) increments. 
In the discrete framework, one can write that
\begin{equation}\label{eq:pgm}
S_i = S_0\exp(x_1+\dots+x_i),
\end{equation}
where $x_i=L(t_i)-L(t_{i-1})$ are i.i.d. variables. Let $\bm x=(x_1,\dots,x_m)^\top$, and denote $\phi(\cdot)$ as the cumulative distribution function (CDF) of $x_i$. Assume  that $\phi$ is continuous. Assume further that $x_i$
can be generated by the inverse method, i.e., $x_i=\phi^{-1}(u_i)$, where $u_i\simiid \U{(0,1)}$.  It is easy to see that the Black-Scholes satisfies the two assumptions; see Section~\ref{sec:discu} for further discussions.  This gives 
\begin{equation}\label{eq:mapping}
S_i = F_i(\bm{u}):=S_0\exp\{\phi^{-1}(u_1)+\dots+\phi^{-1}(u_i)\},
\end{equation}  
where $\bm{u}=(u_1,\dots,u_d)^\top$ and $d=m$ in this setting.

\subsection{Verifying the Variable Separation Condition: Three Typical Cases}\label{sec:varf}
Before using the VPO method in the smoothing step, we need to verify the variable separation condition for the target function \eqref{Payoff}. We consider three
different classes of $q(\cdot)$ involved in \eqref{Payoff}:
\begin{itemize}
	\item component function $q_{\mathrm{C}}(\bm S)=S_j-\kappa$ for some $j\in\{1,\dots,m\}$,	
	\item extreme function $q_{\mathrm{E}}(\bm S)=\min(S_1,\dots,S_m)-\kappa$ or $q_{\mathrm{E}}(\bm S)=\kappa-\max(S_1,\dots,S_m)$,
	\item average function $q_{\mathrm{A}}(\bm S)=S_{\mathrm{A}}-\kappa$, where $S_{\mathrm{A}}:= (1/m)\sum_{i=1}^m S_i$,
\end{itemize}
where $\kappa$ is a constant. The above functions appear in
payoffs of commonly traded options, which are also studied in \cite{tong:liu:2016}. For example, \cite{glas:1999} studied a path-dependent option with discounted payoff $\max(S_{\mathrm{A}}-K,0)\I{S_m>\kappa}$, where $K$ is a constant. It can be translated into  the form \eqref{Payoff} with $f(\bm S)=\max(S_{\mathrm{A}}-K,0)$ and $q(\bm S)=q_{\mathrm{C}}(\bm S)$. A down-and-out barrier call option
with (undiscounted) payoff $\max(S_m-K,0)\I{\min(S_1,\dots,S_m)>\kappa}$ also fits into the form \eqref{Payoff} with $q(\bm S)=q_{\mathrm{E}}(\bm S)$.
A binary Asian option with (undiscounted) payoff $\I{S_\mathrm{A}>\kappa}$ has  the form~\eqref{Payoff} with  $q(\bm S)=q_{\mathrm{A}}(\bm S)$. 

For the component function $q_\mathrm{C}(\bm S)=S_j-\kappa$, using \eqref{eq:mapping} gives
\begin{align*}
\{q_\mathrm{C}(\bm S)>0\}&\Leftrightarrow\left\lbrace S_0\exp\left(\sum_{i=1}^jx_i\right)>\kappa\right\rbrace\Leftrightarrow \left\lbrace u_1>\gamma_j(\bm u_{2{:}d};\kappa)\right\rbrace,
\end{align*}
where 
\begin{equation}\label{eq:cf}
\gamma_j(\bm u_{2{:}d};\kappa):=\phi\left(\log(\kappa/S_0)-\sum_{i=2}^jx_i\right)\leq 1.
\end{equation}
The variable separation condition is thus verified by setting $\Gamma_1=\gamma_j(\bm u_{2{:}d};\kappa)$ and $\Gamma_2=1$. 

For the extreme function $q_\mathrm{E}(\bm S)=\min(S_1,\dots,S_m)-\kappa$, it is easy to find that
\begin{align*}
\{q_\mathrm{E}(\bm S)>0\}&\Leftrightarrow \bigcap_{j=1}^m \{S_j-\kappa>0\}\Leftrightarrow \bigcap_{j=1}^m \{u_1>\gamma_j(\bm u_{2{:}d};\kappa)\}\\
&\Leftrightarrow \left\lbrace u_1>\max_{j=1,\dots,m}\gamma_j(\bm u_{2{:}d};\kappa)\right\rbrace.
\end{align*}
So the variable separation condition  holds with $\Gamma_1=\max_{j=1,\dots,m}\gamma_j(\bm u_{2{:}d};\kappa)$ and $\Gamma_2=1$. Similar analysis applies to the case of $q_{\mathrm{E}}(\bm S)=\kappa-\max(S_1,\dots,S_m)$.

We next consider the average function $q_{\mathrm{A}}(\bm S)=S_{\mathrm{A}}-\kappa$. It follows from~\eqref{eq:mapping} that
\begin{align*}
S_{\mathrm{A}} =\frac 1m\sum_{i=1}^m S_i=\frac{\exp[\phi^{-1}(u_1)]}{m}\sum_{i=1}^m S_0\exp\left(\sum_{j=2}^ix_j\right).
\end{align*}
We thus arrive at the  equivalence
\begin{equation*}
\{S_{\mathrm{A}}-\kappa>0\}\Leftrightarrow
\{\gamma(\bm{u}_{2{:}d})<u_1<1\},\label{EquelSA}
\end{equation*}
where
\begin{equation}\label{eq:lbsa}
\gamma(\bm{u}_{2{:}d}):=\phi\left(\log(\kappa m)-\log\left(\sum_{i=1}^m S_0\exp\left(\sum_{j=2}^ix_j\right)\right)\right)\le 1.
\end{equation}
This implies that the variable
separation condition is satisfied by setting $\Gamma_1=\gamma(\bm u_{2{:}d})$ and $\Gamma_2=1$. Since the CDF $\phi$ is assumed to be continuous, $\Gamma_1$ and $\Gamma_2$ are continuous for the three cases. By Theorem~\ref{ThmVarReduction}, the VPO method yields a smoothed and unbiased estimate with reduced variance.

It is easy to see that the variable
separation condition still holds for functions involving multiple indicators of the form
\begin{equation*}\label{multipPayoff}
g(\bm{S})=f(\bm{S})\prod_{j=1}^{J}\I{q_j(\bm{S})>0},
\end{equation*}
where $q_j$ belong to the three cases above and $J>1$. Let $\kappa_j=\kappa$ for $j=1,\dots,m-1$, $\kappa_d=\max(K,\kappa)$. The payoff of the down-and-out barrier option can be rewritten as
\begin{align}
g(\bm{S})&=(S_m-K)\prod_{j=1}^m\I{S_j>\kappa_j}\notag\\
&=(S_m-K)\prod_{j=1}^m\I{u_1>\gamma_j(\bm{u}_{2{:}d};\kappa_j)}\notag\\
&=(S_m-K)\I{u_1>\max_{j=1,\dots,m}\gamma_j(\bm u_{2{:}d};\kappa_j)},\label{eq:barrier}
\end{align}
where $\gamma_j(\bm u_{2{:}d};\kappa_j)$ is given by \eqref{eq:cf}. From this point of view, the variable
separation condition holds if we take $q(\bm S)=S_m-K$ to translate into the form \eqref{Payoff}. By doing so, the function $q(\bm S)$ is smoother than the function $\max(S_m-K,0)$.

\begin{remark}\label{rem:genways}
	The way to generate $\bm{x}$ is not unique.	To verify the variable separation condition for the three classes of functions, it only requires that $x_1$ can be expressed as an invertible function of $u_1$ and $x_j$ depends on $\bm u_{2{:}d}$ for any $j\in\{2,\dots,m\}$. This leaves room to exploit other ways to generate $\bm{x}$ from $\bm u$ such that the variable separation condition is satisfied. 
\end{remark}

\subsection{A New Dimension Reduction Method for Gaussian Cases}\label{SectionPGM}
Section~\ref{sec:varf} shows that under the exponential L\'evy framework (satisfying some conditions), the VPO method is applicable for the function \eqref{Payoff} with three different forms of $q(\cdot)$.
The analysis relies on the mapping \eqref{eq:mapping} that provides a usual way to generate the asset prices. Actually there are many other ways to generate the asset prices such that the variable separation condition  is also satisfied. To be more precisely, we restrict our attention to Gaussian cases in which
$x_i$ in \eqref{eq:pgm} are i.i.d. normal variables $N(a,b^2)$ for $a\in\R$ and $b>0$. This implies that $\bm{x}\sim N(a\bm{1},b^2\bm I_d)$. To generate $\bm{x}$, we usually take $\bm{x}=a\bm 1+b\bm{z}$, where $\bm{z}\sim N(\bm 0,\bm{I}_d)$. In doing so, $\bm S$ can be written as a function of $\bm z$, denoted by $\mathcal{S}(\bm{z})$. 
We thus arrive at the following equalities
\begin{equation}\label{eq:newproblem}
\mu=\E[g(\bm{S})]=\E{[g(\mathcal{S}(\bm{z}))]}=\E{[g(\mathcal{S}(\bm{Uz}))]},
\end{equation}
where $\bm{U}$ is an arbitrary orthogonal matrix. The last equality in \eqref{eq:newproblem} is due to the fact that $\bm{x}=a\bm 1+b\bm{Uz}\sim N(a\bm{1},b^2\bm I_d)$ holds for any orthogonal matrix~$\bm{U}$.

Note that different choices of the  orthogonal matrix $\bm{U}$ lead to different mappings $F(\bm{u})=\mathcal{S}(\bm{U}\Phi^{-1}(\bm u))$ after using the inverse transformation $\bm{z}=\Phi^{-1}(\bm u)$, where $\Phi$ is the CDF of the standard normal. The mapping \eqref{eq:mapping} corresponds to the simple case  $\bm{U}=\bm I_d$. We should note that not all orthogonal matrices $\bm{U}$ can satisfy the variable separation condition. The next lemma shows that there exists a class of orthogonal matrices $\bm U$ that makes the VPO smoothing method applicable for our problems. 

\begin{lemma}\label{lam:UU}
Suppose that $F(\bm{u})=\mathcal{S}(\bm{U}\Phi^{-1}(\bm u))$. If the orthogonal matrix $\bm{U}$ has the form
\begin{equation}\label{UMatrix}
\bm{U}=
\left[\begin{array}{cccc}
1&0&\cdots&0\\
0\\
\vdots& &\bm{V}\\
0\\
\end{array}\right],
\end{equation}
where $\bm{V}$ is an arbitrary $(d-1)\times (d-1)$ orthogonal matrix, the  variable separation condition still holds for  the function \eqref{Payofftrans} with the three different forms of $q(\cdot)$ given in Section~\ref{sec:varf}.    
\end{lemma}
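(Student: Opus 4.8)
The plan is to reduce the claim to the sufficient condition isolated in Remark~\ref{rem:genways}: once I show that, under the prescribed $\bm{U}$, the first increment $x_1$ is an invertible function of $u_1$ alone while every subsequent increment $x_2,\dots,x_m$ depends only on $\bm{u}_{2{:}d}$, the variable separation condition for the three forms of $q(\cdot)$ follows exactly as in Section~\ref{sec:varf}.

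First I would exploit the block structure \eqref{UMatrix}. Writing $\bm{z}=(z_1,\bm{z}_{2{:}d}^{\trans})^{\trans}$ with $\bm{z}_{2{:}d}=(z_2,\dots,z_d)^{\trans}$, the form of $\bm{U}$ gives
\begin{equation*}
\bm{U}\bm{z}=\bigl(z_1,\,(\bm{V}\bm{z}_{2{:}d})^{\trans}\bigr)^{\trans},
\end{equation*}
so that the leading coordinate of $\bm{U}\bm{z}$ is $z_1$ and the remaining coordinates are functions of $\bm{z}_{2{:}d}$ only. Since $\bm{x}=a\bm{1}+b\bm{U}\bm{z}$, this yields $x_1=a+bz_1$ and $x_j=a+b(\bm{V}\bm{z}_{2{:}d})_{j-1}$ for $j\ge2$. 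Applying the inverse transform $\bm{z}=\Phi^{-1}(\bm{u})$ componentwise, $z_1=\Phi^{-1}(u_1)$ depends only on $u_1$ and $\bm{z}_{2{:}d}$ depends only on $\bm{u}_{2{:}d}$. Consequently $x_1=a+b\Phi^{-1}(u_1)$ is a strictly increasing (hence invertible) function of $u_1$, because $b>0$ and $\Phi^{-1}$ is increasing, while each $x_j$ with $j\ge2$ is a function of $\bm{u}_{2{:}d}$ alone. These are precisely the two structural properties demanded in Remark~\ref{rem:genways}.

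Finally I would transfer the verification of Section~\ref{sec:varf}. For the component, extreme, and average functions, the events $\{q_{\mathrm{C}}(\bm{S})>0\}$, $\{q_{\mathrm{E}}(\bm{S})>0\}$ and $\{q_{\mathrm{A}}(\bm{S})>0\}$ were each reduced to a threshold inequality on $\exp(x_1)$---equivalently on $x_1$---whose threshold involves only $x_2,\dots,x_m$, hence only $\bm{u}_{2{:}d}$. Because $x_1$ is monotone in $u_1$, such a threshold inequality is equivalent to an interval condition $\{\Gamma_1(\bm{u}_{2{:}d})<u_1<\Gamma_2(\bm{u}_{2{:}d})\}$, with one endpoint a constant ($0$ or $1$) and the other depending on $\bm{u}_{2{:}d}$ through $x_2,\dots,x_m$; the continuity of these endpoints follows from the continuity of $\Phi$. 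The one point demanding care is that the derivations in Section~\ref{sec:varf} were written for the specific mapping \eqref{eq:mapping} in which $x_1=\phi^{-1}(u_1)$. I would therefore emphasize that those derivations use nothing beyond the two structural properties established above---the monotone invertible dependence of $x_1$ on $u_1$ and the $\bm{u}_{2{:}d}$-measurability of $x_2,\dots,x_m$---so they carry over verbatim, which completes the proof. The lemma is thus not so much a new computation as a recognition that the block form $1\oplus\bm{V}$ is exactly the family of orthogonal transformations that decouples $z_1$ from $\bm{z}_{2{:}d}$ and hence preserves the form required by the variable push-out method.
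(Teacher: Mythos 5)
Your proof is correct and follows essentially the same route as the paper's: the paper's own argument is precisely that the block form of $\bm{U}$ forces $x_1=a+b\Phi^{-1}(u_1)$ to depend only on $u_1$ while $x_2,\dots,x_d$ depend only on $\bm{u}_{2{:}d}$, after which the verification of Section~\ref{sec:varf} applies verbatim. Your version merely spells out the monotonicity of $x_1$ in $u_1$ and the reduction to the structural conditions of Remark~\ref{rem:genways} in more detail than the paper does.
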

\begin{proof}
If $\bm{x}=a\bm 1+b\bm{Uz}$, where $\bm{U}$ is given in \eqref{UMatrix}, then $x_1=\phi^{-1}(u_1):=a+b\Phi^{-1}(u_1)$ independently of $\bm{U}$, and $x_2,\dots,x_d$ depend on $\bm u_{2{:}d}$. The  variable separation condition can be verified following the analysis in Section~\ref{sec:varf}. 
\end{proof}

Lemma~\ref{lam:UU} admits that we  are free to select the orthogonal matrix $\bm{V}$ involved in \eqref{UMatrix}. If we choose $\bm{V}$ naively, the resulting estimate \eqref{smoothedFn}  may have high effective dimension though it is continuous. An effective implementation of QMC  must
concurrently take into consideration the whole function. To this end, we can make full use of the unspecific orthogonal
matrix $\bm{V}$. Motivated by the QR method \citep[][]{he:wang:2014}, we propose a modified QR method (MQR) to determine the matrix $\bm{V}$ (thus the matrix $\bm U$) aiming at reducing the dimension of the
target function. 

To motivate a good choice of $\bm{V}$, we now assume that the target function $g(\mathcal{S}(\bm{z}))$ has the form
\begin{equation}\label{PayoffQR}
G(\bm{w}_1^{\trans}\bm{z},\dots,\bm{w}_r^{\trans}\bm{z})=:G(\bm{W}^{\trans}\bm{z}),\ \bm{z}\sim N(\bm{0},\bm{I}_d),
\end{equation}
where $\bm{w}_i\in\R^d$, $\bm{W}:=[\bm{w}_1,\dots,\bm{w}_r]\in \R^{d\times r}$. The QR method is also based on the form \eqref{PayoffQR}; see \cite{he:wang:2014} for details.
Suppose $\bm U$ has the form \eqref{UMatrix}; then we have
\begin{equation}\label{MQReq}
\bm{W}^{\trans}\bm{Uz}=z_1\bm{W}_1^\top+\bm{W}_{-1}^\top\bm{V}\bm{z}_{2{:}d},
\end{equation}
where $\bm{W}_1$ is the first row of $\bm{W}$, and $\bm{W}_{-1}$ is the remaining $d-1$ row of $\bm{W}$,  and
$\bm{z}_{2{:}d}=(z_2,\dots,z_d)^{\trans}$. We perform a QR decomposition on $\bm{W}_{-1}$, resulting in
\begin{equation}\label{QR}
\bm{W}_{-1}=\bm{Q}\bm{R},
\end{equation}
where $\bm{Q}\in \R^{(d-1)\times (d-1)}$ is an orthogonal matrix and $\bm{R}\in \R^{(d-1)\times r}$ is
an upper triangular matrix. Let $\bm{V}=\bm{Q}$, it follows from \eqref{MQReq} that
\begin{equation*}
\bm{W}^{\trans}\bm{Uz}=z_1\bm{W}_1^\top+(\bm{Q}\bm{R})^{\trans}\bm{Q}\bm{z}_{2{:}d}
=z_1\bm{\alpha}+\bm{R}^{\trans}\bm{z}_{2{:}d}=:z_1\bm{\alpha}+\bm{L}\bm{z}_{2{:}d},
\end{equation*}
where $\bm{L}:=\bm{R}^{\trans}\in \R^{r\times (d-1)}$ is a lower triangular matrix. We  obtain that
\begin{equation*}
G(\bm{W}^{\trans}\bm{Uz})=G(\underbrace{\alpha_1z_1+\ell_{11}z_2}_{\text{two variables}},
\underbrace{\alpha_2z_1+\ell_{21}z_2+\ell_{22}z_3}_{\text{three variables}},\dots,\underbrace{\alpha_rz_1+\ell_{r1}z_2+\dots+
	\ell_{rr}z_{r+1}}_{\text{$r+1$ variables}}),
\end{equation*}
which depends on $r+1$ variables. Thus we achieve a dimension reduction when $r<d-1$. We summarize the results in the following theorem.
\begin{theorem}\label{MQR}
	Assume that the function $g(\mathcal{S}(\bm{z}))$ has the form $G(\bm{W}^{\trans}\bm{z})$ given in \eqref{PayoffQR},
	where $\mathrm{rank}(\bm{W})=r< d$. Let $\bm{U}$ be an orthogonal matrix of the form \eqref{UMatrix} in which $\bm{V}$
	equals $\bm{Q}$ determined by the QR decomposition \eqref{QR}. Then $g(\mathcal{S}(\bm{Uz}))$
	is  changed to the form
	\begin{align}
	G(\alpha_1z_1+\ell_{11}z_2,
	\alpha_2z_1+\ell_{21}z_2+\ell_{22}z_3,\dots,\alpha_rz_1+\ell_{r1}z_2+\dots+
	\ell_{rr}z_{r+1}),\label{Newf}
	\end{align}
	where $\alpha_i$, $\ell_{ij}$ are constants. 
\end{theorem}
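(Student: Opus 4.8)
The plan is to treat this as a direct linear-algebra computation, since the hypothesis already packages all the payoff information into the single matrix $\bm{W}$ through the representation $g(\mathcal{S}(\bm{z}))=G(\bm{W}^{\trans}\bm{z})$ in \eqref{PayoffQR}. First I would substitute $\bm{z}\mapsto\bm{Uz}$ to obtain $g(\mathcal{S}(\bm{Uz}))=G(\bm{W}^{\trans}\bm{Uz})$, so that the entire claim reduces to understanding the vector $\bm{W}^{\trans}\bm{Uz}\in\R^r$. Exploiting the block form of $\bm{U}$ in \eqref{UMatrix}---which keeps the first coordinate fixed and acts by $\bm{V}$ on the remaining ones, so that $\bm{Uz}=(z_1,\bm{V}\bm{z}_{2{:}d})$---I would split $\bm{W}$ into its first row $\bm{W}_1$ and its lower block $\bm{W}_{-1}$ to recover the identity \eqref{MQReq}, namely $\bm{W}^{\trans}\bm{Uz}=z_1\bm{W}_1^{\trans}+\bm{W}_{-1}^{\trans}\bm{V}\bm{z}_{2{:}d}$.

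The crux is then the choice of $\bm{V}$ that makes $\bm{W}_{-1}^{\trans}\bm{V}$ triangular. Taking the QR factorization $\bm{W}_{-1}=\bm{Q}\bm{R}$ from \eqref{QR} and setting $\bm{V}=\bm{Q}$, the orthogonality relation $\bm{Q}^{\trans}\bm{Q}=\bm{I}_{d-1}$ collapses $\bm{W}_{-1}^{\trans}\bm{V}=\bm{R}^{\trans}\bm{Q}^{\trans}\bm{Q}=\bm{R}^{\trans}$. Writing $\bm{\alpha}:=\bm{W}_1^{\trans}$ and $\bm{L}:=\bm{R}^{\trans}$, this yields $\bm{W}^{\trans}\bm{Uz}=z_1\bm{\alpha}+\bm{L}\bm{z}_{2{:}d}$. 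Since $\bm{R}$ is upper triangular, its transpose $\bm{L}$ is a lower-triangular matrix of size $r\times(d-1)$, so $\ell_{ij}=0$ whenever $j>i$.

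Finally I would read off the components: by lower-triangularity the $i$-th entry of $z_1\bm{\alpha}+\bm{L}\bm{z}_{2{:}d}$ equals $\alpha_iz_1+\ell_{i1}z_2+\dots+\ell_{ii}z_{i+1}$, which depends only on $z_1,\dots,z_{i+1}$. Substituting these $r$ arguments into $G$ reproduces exactly the nested form \eqref{Newf}, and since the largest index appearing is $z_{r+1}$, the function depends on at most $r+1$ of the $d$ variables. I expect no serious obstacle; the only points requiring care are bookkeeping ones---confirming that the rank condition $r<d$ guarantees $r\le d-1$, so that the tall QR factorization of $\bm{W}_{-1}\in\R^{(d-1)\times r}$ is well defined and its $\bm{R}$ factor is genuinely upper triangular, and checking that the indexing of $\bm{L}$ against $\bm{z}_{2{:}d}$ produces the claimed staircase pattern rather than an off-by-one shift (the $j$-th column of $\bm{L}$ multiplies $z_{j+1}$, not $z_j$).
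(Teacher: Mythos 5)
Your proposal is correct and follows essentially the same route as the paper: split $\bm{W}$ into its first row and the block $\bm{W}_{-1}$ using the form of $\bm{U}$ in \eqref{UMatrix}, apply the QR decomposition \eqref{QR} with $\bm{V}=\bm{Q}$ so that $\bm{W}_{-1}^{\trans}\bm{V}=\bm{R}^{\trans}$ is lower triangular, and read off the staircase dependence on $z_1,\dots,z_{r+1}$. Your extra bookkeeping remarks (that $r<d$ makes the QR of the $(d-1)\times r$ block well defined, and that column $j$ of $\bm{L}$ pairs with $z_{j+1}$) are consistent with, and slightly more explicit than, the paper's derivation.
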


Theorem~\ref{MQR} also holds if
$\text{rank}(\bm{W})=d$, but the last two arguments in \eqref{Newf} depend on $d$ variables. The MQR method is very attractive
when $r\ll d$ since the dimension can be reduced significantly.
There are at least three benefits of using the MQR method:
(a) The VPO smoothing method is applicable.
(b) The MQR method has the ability to handle multiple structures as the QR method.
(c) It has the potential to reduce the effective dimension of the target function. All these aspects are beneficial to QMC.

\begin{remark}\label{rem:matching}
	In practice the target functions rarely confirm  the form $G(\bm{W}^{\trans}\bm{z})$ in \eqref{PayoffQR}. To remedy this, we use the first-order Taylor approximation to get the desired form. For the target function $g(\mathcal{S}(\bm{z}))$ of the form \eqref{Payoff}, one possible way is to take the first-order Taylor approximations of the sub-functions $f(\mathcal{S}(\bm{z}))$ and $q(\mathcal{S}(\bm{z}))$. Let $\mathcal{V}_f(\bm z):=(\frac{\partial f(\mathcal{S}(\bm{z}))}{\partial z_1},\cdots,\frac{\partial f(\mathcal{S}(\bm{z}))}{\partial z_d})^\top$ be the gradient vectors of $f(\mathcal{S}(\bm{z}))$ and similarly for $\mathcal{V}_q(\bm z)$. The  first-order Taylor approximation admits
	\begin{align*}
	f(\mathcal{S}(\bm{z}))&\approx f(\mathcal{S}(\bm{z}_0))+\mathcal{V}_f^\top(\bm z_0)(\bm z-\bm z_0),\text{ and}\\q(\mathcal{S}(\bm{z}))&\approx q(\mathcal{S}(\bm{z}_0))+\mathcal{V}_q^\top(\bm z_0)(\bm z-\bm z_0).
	\end{align*}
	By doing so, the target function $g(\mathcal{S}(\bm{z}))$ can be approximated by a function of the required form $G(\mathcal{V}_q^\top(\bm z_0)\bm z,\mathcal{V}_f^\top(\bm z_0)\bm z)$.
	In practice, we take the mean of $\bm z$ as the fixed point $\bm z_0$, i.e., $\bm z_0=\bm{0}$. 
	Therefore, the MQR method is not restricted to functions of the special form $G(\bm{W}^{\trans}\bm{z})$. The only ingredient required for the setup of the MQR method is the weight matrix $\bm{W}$, as required for the QR method. We refer to \cite{he:wang:2014} for the choices of $\bm{W}$ for some typical finance problems. 	
\end{remark}

\subsection{Discussions}\label{sec:discu}
Generally speaking, Section~\ref{SectionPGM} hinges on the assumption that the target function	can be expressed as a function of $\bm{x}$ whose entries are normal random variables. Motivated by \cite{imai:tan:2009}, the MQR method can be extended to general distributions of $\bm{x}$ (the increments of the L\'evy process). More specifically, let's rewrite the target function as $g(\bm{x})$, where $x_i$ are i.i.d. random variables whose CDFs are $\phi(\cdot)$. Using the inverse transformation $x_i=\phi^{-1}(u_i)$, we arrive at 
\begin{equation*}
\mu = \E{[g(\bm x)]}=\E{[g(\phi^{-1}(\bm u))]},
\end{equation*}
where $\phi^{-1}(\bm u):=(\phi^{-1}(u_1),\dots,\phi^{-1}(u_d))^\top$.
Due to the fact that $\bm u=\Phi(\bm{z})\sim \U{(0,1)}^d$, 
the problem can be transformed to the Gaussian case 
\begin{equation*}
\mu =\E{[g(\phi^{-1}(\bm u))]}=\E{[g(\phi^{-1}(\Phi(\bm{z})))]}=\E{[g(\phi^{-1}(\Phi(\bm{Uz})))]},
\end{equation*}
where $\bm{U}$ is an arbitrary orthogonal matrix. 
Thus the MQR method can be applied to the transformed function $g(\phi^{-1}(\Phi(\bm{Uz})))$ to obtain a good $\bm U$ as did in Section~\ref{SectionPGM}.

A key issue to implement the proposed method is that the  increments of the L\'evy process 
can be generated by the inverse method. For most L\'evy processes, however, we only know the density function or the characteristic function
of the increments, and  the inverse CDF $\phi^{-1}$ cannot be expressed analytically. One may thus resort to some numerical inversion methods. When the density function is known explicitly, \cite{imai:tan:2009} advocated the numerical inversion method proposed by \cite{horm:2003}, which is based on Hermite interpolation. The method is fast and could yield accuracy closed to machine precision. \cite{chen:feng:2012} showed how to compute the inverse CDF numerically  by  the Hilbert transform method when the characteristic function is available. Our proposed method can therefore be applied to a more general exponential L\'evy processes, for example, the generalized hyperbolic (GH) process. The GH process encompasses many important models, such as the hyperbolic model and the NIG model. We refer to the monograph \cite{tank:2003} for the details on various L\'evy processes.

Actually, the VPO method has a wider scope than the exponential L\'evy framework presented in this section.  As an extension, Section~\ref{sec:hestonModel} shows how the proposed method works for the Heston model, which is no long an  exponential L\'evy model.

\section{Numerical Experiments}\label{sec:numerical}
\subsection{Black-Scholes Model}
We perform some numerical experiments under the Black-Scholes framework. Under the risk-neutral measure, the asset follows the geometric Brownian motion, a celebrated exponential L\'evy process  
\begin{equation}
	\frac{\mrd S(t)}{S(t)}=r\mrd t+\sigma \mrd B(t),\label{GBM}
\end{equation}
where $r$ is
the riskless interest rate, $\sigma$ is the volatility and $B(t)$
is a standard Brownian motion. Under this framework, the solution of \eqref{GBM} is analytically available $S(t)=S_0\exp\{L(t)\}$, where $L(t)=(r-\sigma^2/2)t+\sigma B(t)$
and $S_0$ is the initial price of the asset. The standard way given by \eqref{eq:pgm}  generates the prices $S_i=S_0\exp(x_1+\dots+x_i)$ via
\begin{equation*}
x_i = L(t_1)-L(t_{i-1})=(r-\sigma^2/2)\Delta t+\sigma (B(t_i)-B(t_{i-1})).
\end{equation*}
Note that the increments of Brownian motion $B(t_i)-B(t_{i-1})\simiid N(0,\Delta t)$. As a result, $x_i\simiid N(a,b^2)$ with $a = (r-\sigma^2/2)\Delta t$ and $b^2=\sigma^2\Delta t$. So the prices can be generated via
\begin{align}
x_i &= a+bz_i,\notag\\
S_i &=S_0\exp\left(\sum_{j=1}^ix_j\right)= S_0\exp\left(ai+b\sum_{j=1}^iz_j\right),\label{eq:SiGBM}
\end{align}
where $z_i\simiid N(0,1)$.
The VPO smoothing method is applicable under the PGM \eqref{eq:SiGBM} for the three typical kinds of problems in  Section~\ref{sec:varf}. However, the VPO smoothing method is generally infeasible when other common PGMs are used, such as the Brownian bridge (BB)
\citep{Caflisch1997}, the principal component analysis (PCA) \citep{Acworth1998} and the QR method proposed by \cite{he:wang:2014}. That is why we need the MQR method. Using the MQR method is straightforward as $x_i$ are normal variables; see Section~\ref{SectionPGM}.	
We examine the effectiveness of the proposed method for the following examples. 

\begin{example}\label{ex:1}
A binary Asian option is an option with the discounted payoff
	\begin{equation}
		g_1(\bm{S})=e^{-rT}\I{S_{\mathrm{A}}>K},\label{Binary}
	\end{equation}
	where $K$ is the strike price. To fit into the form~\eqref{Payoff}, we take $f(\bm{S})=e^{-rT}$ and $q(\bm{S})=S_{\mathrm{A}}-K$. Following the analysis in Section~\ref{sec:varf}, $\Gamma_1=\gamma(\bm{u}_{2{:}d})$ given by \eqref{eq:lbsa} and $\Gamma_2=1$.
\end{example}
\begin{example}\label{ex:2}
	The pathwise estimate for the delta of an arithmetic Asian option with a discounted payoff $e^{-rT}(S_{\mathrm{A}}-K)^+$ is given by
	\begin{equation}\label{Delta}
		g_2(\bm{S})=e^{-rT}\frac{S_{\mathrm{A}}}{S_0}\I{S_{\mathrm{A}}>K}.
	\end{equation}
	The Greek delta is the sensitivity of a financial derivative with respect to the initial price of the underlying asset. To fit into the form~\eqref{Payoff}, we take $f(\bm{S})=e^{-rT}S_{\mathrm{A}}/S_0$ and $q(\bm{S})=S_{\mathrm{A}}-K$. The bounds $\Gamma_1$ and $\Gamma_2$ are the same as in Example~\ref{ex:1}.
\end{example}
\begin{example}\label{ex:3}
	A down-and-out barrier call option is an option with the discounted payoff
	\begin{align}
		g_3(\bm{S})&=e^{-rT}\max(S_m-K,0)\I{\min(S_1,\dots,S_m)>\kappa}\notag\\
		&=e^{-rT}(S_m-K)\prod_{j=1}^{m}\I{S_j>\kappa_j},\label{gamma}
	\end{align}   
	where $\kappa_j=\kappa$ for $j=1,\dots,m-1$, and $\kappa_m=\max(K,\kappa)$. We take $f(\bm{S})=e^{-rT}(S_m-K)$ and $q(\bm{S})=\ \min(S_1-\kappa_1,\dots,S_{m}-\kappa_m)$ to  translate the payoff into the form \eqref{Payoff}. It follows from \eqref{eq:barrier} that $\Gamma_1=\max_{j=1,\dots,m}\gamma_j(\bm u_{2{:}d};\kappa_j)$ and $\Gamma_2=1$.
\end{example}

It is worth pointing out that all target functions in Examples \ref{ex:1}--\ref{ex:3} do not have the  required form for the QR or MQR method
since the arithmetic average of the stock prices $S_{\mathrm{A}}$ is not a function of linear combinations of normal variables. Thus we cannot apply the QR or MQR method directly. As discussed in Remark~\ref{rem:matching}, we take the first-order Taylor approximation for $S_{\mathrm{A}}$. Denote  $\bm{w}_0$ as the gradient vector of  $S_{\mathrm{A}}$ evaluated at $\bm{z}_0=\bm 0$. The arithmetic average $S_{\mathrm{A}}$ can thus be approximated by a function of $\bm{w}_0^\top\bm z$.  For the functions \eqref{Binary} and  \eqref{Delta}, we thus obtain matching functions of the desired form $G(\bm{w}_0^\top\bm z)$. From \eqref{eq:SiGBM}, we find that  $S_i$ can be expressed as a function of $\bm w_i^\top \bm z$, where $\bm{w}_i$ is a $d$-dimensional vector with the first $i$ entries $1$ otherwise $0$. For the function \eqref{gamma}, we thus obtain a matching function of the desired form $G(\bm{w}_1^\top\bm z,\dots,\bm{w}_m^\top\bm z)$.   For Examples~\ref{ex:1} and  \ref{ex:2}, the matrix $\bm{W}$ used for the QR and MQR methods is chosen to be $\bm{W}=\bm{w}_0$ as suggested by the matching
functions, while for Example \ref{ex:3}, we set $\bm{W}=[\bm{w}_m,\bm{w}_{m-1},\dots,\bm{w}_{1}]$ as suggested by \cite{he:wang:2014}. 

We benchmark the relative efficiency of the unsmoothed and smoothed QMC methods to the MC method by computing the variance reduction factor
(VRF). The VRF is the empirical variance of the crude MC estimate divided by the empirical variance of the estimate of
interest, i.e.,
$$\mathrm{VRF}:=\frac{\hat{\sigma}^2_{\mathrm{MC}}}{\hat{\sigma}^2},$$
where $\hat{\sigma}^2_{\mathrm{MC}}$
denotes the sample variance of the crude MC method and $\hat{\sigma}^2$
denotes
the sample variance of the method under consideration.  If there is no substantial difference among the costs of implementing various
QMC methods, the larger is the VRF the more effective is
the underlying QMC-based method.
In our experiments, we use a scrambled version of Sobol' points proposed
by \cite{matousek:1998}, which has lower computational demand than the full scrambling by  \cite{Owen1995}. 
We focus on comparing the performance of the following five methods: 
\begin{itemize}
	\item MC: the plain MC without using the VPO smoothing method,
	\item QMC-I: the same as MC but using low discrepancy points instead of pseudo-random points,
	\item QMC-II: combining QMC-I  with the QR method,
	\item sQMC-I: the VPO method in QMC, i.e., a smoothed version of QMC-I,
	\item sQMC-II: the two-step procedure which incorporates the VPO and MQR methods in QMC.
\end{itemize}
We do not compare the proposed method with other traditional PGMs (e.g., BB and PCA) as \cite{he:wang:2014} showed that the QR method (QMC-I) performs the best among those unsmoothed QMC methods. VRFs reported in all examples are based on a sample
size $N=4096$ and estimated with  $100$ replications. In addition to VRF, we also report the
computational costs (CPU time), where all experiments
are conducted using MATLAB on a PC with
2.6 GHz CPU and 8 GB RAM. 


The parameters are  $S_0=100$, $r=0.04$, $\sigma=0.3$, $K=100$, $\kappa = 90$ and $d=m\in\{16,128\}$. Table~\ref{tab:BS} reports the estimates, VRFs and CPU times for the five methods. In
Figure~\ref{fig:1}, we report the sample variance against the computational cost for Example~\ref{ex:1}. The results show the following:

\begin{itemize}
	\item The sQMC-II method has a consistent advantage over other methods in all examples,
	which attains VRFs as high as several thousands or even hundreds of thousands relative to plain MC.  In particular, the sQMC-II method performs much better than the  sQMC-I method, though both methods use the VPO smoothing method (they differ only in whether or not the MQR method is used).
	\item For the barrier option (Example 3), the  sQMC-II method  has small advantage
	over other methods. This is because the payoff \eqref{gamma} involving $m$ discontinuity structures looks more complicated than those of Examples \ref{ex:1} and \ref{ex:2}. In addition, $\Gamma_1=\max_{j=1,\dots,m}\gamma_j(\bm u_{2{:}d};\kappa_j)$ for Example~\ref{ex:3} has some cusps, implying that the associated smoothed function \eqref{smoothedFn} is not as smooth as those in Examples \ref{ex:1} and \ref{ex:2}.
	\item Table~\ref{tab:BS} shows that the computational costs are very close for the QMC methods with/without using the proposed method, which are no more than three times of the cost of plain MC. Figure~\ref{fig:1} shows that the proposed sQMC-II method is still preferable when the computation cost is taken into consideration. The nominal dimension $d$ has a small impact on the performance of the  sQMC-II method.
	\item Surprisingly, the sQMC-I method, which is the smoothed version of QMC-I, often performs much worse than the best unsmoothed QMC method (i.e., the QMC-II method). This indicates that making discontinuities QMC-friendly by the QR method could yield higher accuracy than smoothing the integrand naively.
\end{itemize}

\begin{figure}[ht]
	\centering
	\includegraphics[width = \hsize]{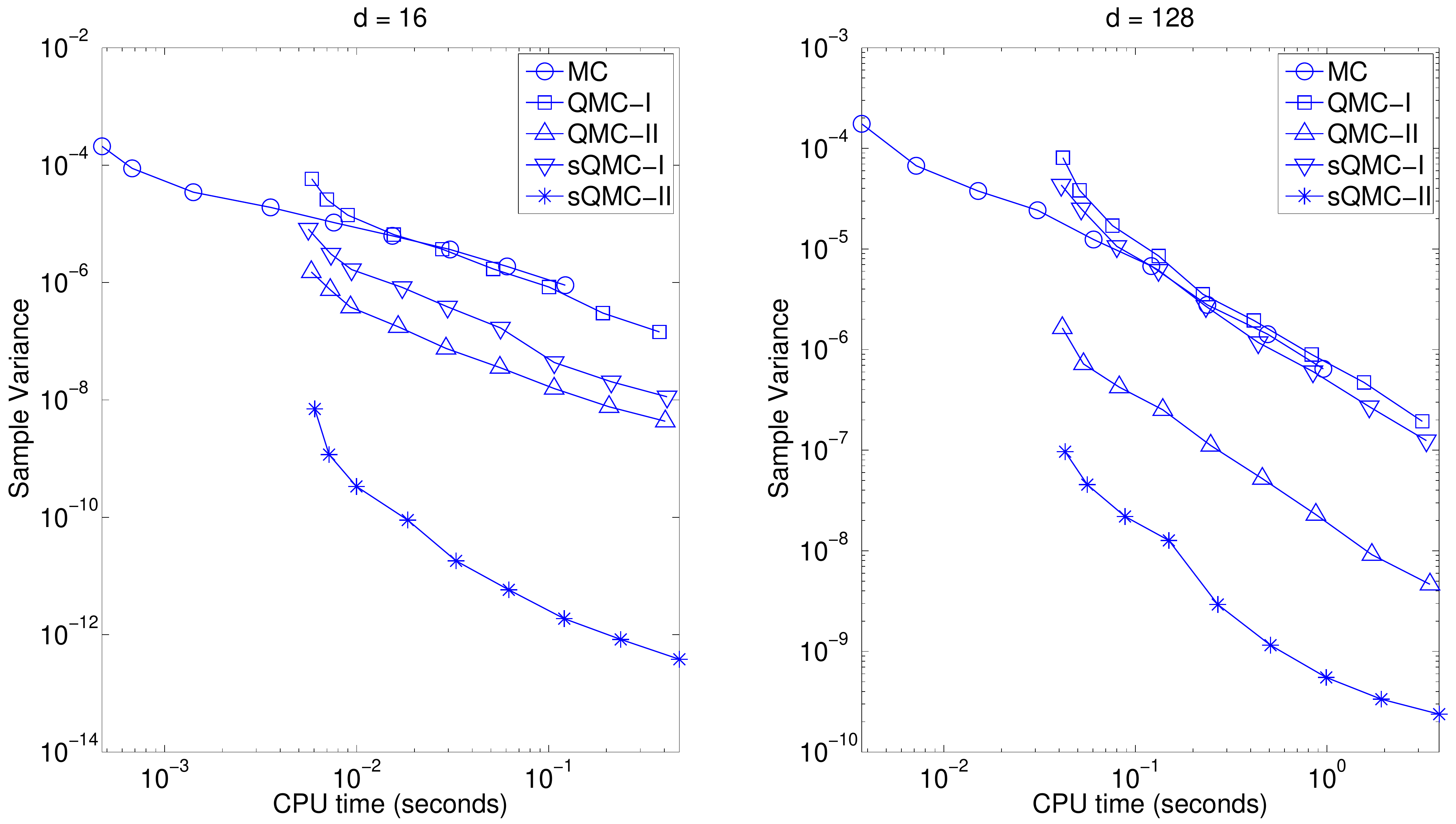}
	\caption{Binary Asian option: pointwise sample variance as a function of CPU time (in seconds) for $d=16$ (left) and $d=128$ (right). The sample sizes are $N=2^i,\ i=10,\dots,18$.\label{fig:1}}
\end{figure}

There is a large difference between the effectiveness of the sQMC-I and sQMC-II methods (both use the VPO smoothing method). A natural question is what leads to the huge diversity of  using MQR or not.
To understand the effect of using MQR, we intentionally compute some effective dimension-related characteristics (such as the truncation variance ratios concentrated on the first variable $R_{\{1\}}$ and on the first two variables $R_{\{1,2\}}$, the degree of additivity $R_{(1)}$, the effective dimension in truncation sense $d_t$, and the mean dimension $d_{\mathrm{ms}}$; see Appendix A for their definitions) for the smoothed integrands. These quantities are closely related to global sensitivity indices. There is some works on application of global sensitivity analysis for assessing QMC efficiency in finance \citep[][]{bian:2015,kuch:shah:2007}.
The numerical results are reported in Table~\ref{tab:efBS}, which are estimated by the QMC method with a large sample size $2^{20}=1048576$.

We observe that the sQMC-II method leads to larger
degree of additivity $R_{(1)}$, much smaller truncation dimension $d_{t}$ and mean dimension $d_{\mathrm{ms}}$ than the sQMC-I method does in all the cases. For Example~\ref{ex:1}, we  observe that $R_{\{1\}}\doteq 99.9\%$ for the sQMC-II method, implying that the resulting functions  are nearly one-dimensional. For
Example \ref{ex:2}, the integrands resulting from the sQMC-II method are nearly two-dimensional since $R_{\{1,2\}}\doteq99.9\%$. These may explain why the sQMC-II method performs much better than the sQMC-I  method and demonstrate the great power of the MQR method in dimension reduction for Examples \ref{ex:1} and \ref{ex:2}. For Example~\ref{ex:3}, the effect of dimension reduction is insignificant. This may explain why the sQMC-II method delivers relatively small VRFs for Example~\ref{ex:3}.

\begin{table}[htbp] 
	\centering
	\caption{Numerical results for the Black-Scholes model}\vspace{0.3cm}
	\begin{tabular}{clrrrrr}
		\toprule
		Cases &
		   & \multicolumn{1}{c}{MC} & \multicolumn{1}{c}{QMC-I} & \multicolumn{1}{c}{QMC-II} & \multicolumn{1}{c}{sQMC-I} & \multicolumn{1}{c}{sQMC-II} \\\midrule
		 \multicolumn{7}{c}{$d=16$} \\
		\multirow{3}[0]{*}{Ex. 1} & Est.  & 0.484812  & 0.484873  & 0.484830  & 0.484741  & 0.484805  \\
		& VRF   & 1     & 3     & 49    & 23    & 59331  \\
		& Time  & 1.5   & 9.2   & 8.9   & 9.1   & 9.5  \\\hline
		\multirow{3}[0]{*}{Ex. 2} & Est.  & 0.566081  & 0.566004  & 0.565950  & 0.565886  & 0.565921  \\
		& VRF   & 1     & 4     & 69    & 33    & 38558  \\
		& Time  &        1.5  &        8.9  &        9.0  &        9.8  &        10.1  \\\hline
		\multirow{3}[0]{*}{Ex. 3} & Est.  & 11.010421  & 10.982053  & 10.978910  & 10.985299  & 10.984770  \\
		& VRF   & 1     & 11    & 15    & 38    & 112  \\
		& Time  &        1.6  &        9.3  &        9.4  &       10.1  &        10.5  \\\hline
		\multicolumn{7}{c}{$d=128$} \\
		\multirow{3}[0]{*}{Ex. 1} & Est.  & 0.485271  & 0.485403  & 0.484720  & 0.485013  & 0.484814  \\
		& VRF   & 1     & 3     & 83    & 4     & 974  \\
		& Time  & 14.3  & 72.5  & 77.0  & 75.2  & 82.4  \\\hline
		\multirow{3}[0]{*}{Ex. 2} & Est.  & 0.563015  & 0.563102  & 0.562506  & 0.562707  & 0.562602  \\
		& VRF   & 1     & 4     & 120   & 6     & 1308  \\
		& Time  &       14.3  &       73.6  &       79.6  &       77.5  &        85.9  \\\hline
		\multirow{3}[0]{*}{Ex. 3} & Est.  & 9.786648  & 9.786195  & 9.827942  & 9.794712  & 9.814580  \\
		& VRF   & 1     & 3     & 8     & 4     & 15  \\
		& Time  &       14.7  &       73.3  &       78.8  &       76.1  &        84.1  \\\bottomrule
	\end{tabular}%
	\label{tab:BS}%
	\begin{tablenotes}
		\item \footnotesize \emph{Note}. The CPU time is reported in milliseconds. ``Ex." and ``Est." stand for ``Example" and ``Estimate", respectively. The results are based on a sample
		size $N=4096$ and estimated with  $100$ replications.
	\end{tablenotes}
\end{table}%

\begin{table}[htbp]
	\centering
	\caption{Effective dimension-related characteristics under the Black-Scholes framework\label{tab:efBS}}\vspace{0.3cm}
	\begin{tabular}{crrrrrrrrrrrr}
		\toprule
		\multirow{2}[0]{*}{Cases} & \multicolumn{1}{c}{\multirow{2}[0]{*}{$d$}} & \multicolumn{5}{c}{sQMC-I}            &       & \multicolumn{5}{c}{sQMC-II} \\\cline{3-7}\cline{9-13}
		&       & $R_{\{1\}}$ & $R_{\{1,2\}}$ & $R_{(1)}$ &     $d_t$ &  $d_{\mathrm{ms}}$ &       & $R_{\{1\}}$ & $R_{\{1,2\}}$ & $R_{(1)}$ &     $d_t$ &  $d_{\mathrm{ms}}$ \\\hline
		\multirow{2}[0]{*}{Ex. 1} & 16    & 15.51 & 29.24 & 84.99 & 13    & 1.37  &       & 99.94 & 99.94 & 99.94 & 1     & 1.00 \\
		& 128   & 1.71  & 3.46  & 73.26 & 109   & 2.99  &       & 99.87 & 99.87 & 99.85 & 1     & 1.00 \\\hline
		\multirow{2}[0]{*}{Ex. 2} & 16    & 0.57  & 16.71 & 89.47 & 14    & 1.24  &       & 0.57  & 99.94 & 99.39 & 2     & 1.01 \\
		& 128   & 0.07  & 1.94  & 81.01 & 109   & 2.38  &       & 0.07  & 99.89 & 99.81 & 2     & 1.00 \\\hline
		\multirow{2}[0]{*}{Ex. 3} & 16    & 1.68  & 10.05 & 63.02 & 16    & 1.47  &       & 1.68  & 91.92 & 91.70 & 15    & 1.12 \\
		& 128   & 0.18  & 1.44  & 52.85 & 127   & 1.98  &       & 0.18  & 81.47 & 84.59 & 119   & 1.46 \\
		\bottomrule
	\end{tabular}%
	\begin{tablenotes}
		\item \footnotesize \emph{Note}. $R_{\{1\}}$, $R_{\{1,2\}}$ and $R_{(1)}$ are reported in percentage. 
	\end{tablenotes}
\end{table}%
Smoothing methods change discontinuous integrands to smooth ones. Smoothness is important to QMC, but it is not the only factor that affects the efficiency of QMC. For  smoothed integrands, effective dimension may have a major impact on the accuracy of QMC. For smooth functions with high effective dimension (such as in the case of the sQMC-I method), one cannot expect a superior convergence rate to appear at moderate sample sizes. Indeed, \cite{owen:1998} showed that for a very smooth function with fully mean dimension, the improvement of QMC over MC  might not set in, until the sample size is large enough.
Therefore, both the smoothing method and the dimension reduction method are important for QMC. The joint effect of  the VPO smoothing method and the MQR method makes the proposed two-step procedure very attractive.

\subsection{Exponential NIG L\'{e}vy Model}
We study the dynamic of the asset $S(t)$ that follows the exponential NIG L\'{e}vy process
\begin{equation*}\label{eq:expNIG}
S(t)=S_0\exp\left\lbrace  L(t) \right\rbrace,
\end{equation*}
where $\{L(t),{t\ge 0}\}$ is the NIG  L\'{e}vy process \citep[see][]{tank:2003}. The NIG L\'{e}vy process is a special GH L\'{e}vy process whose marginal distribution follows the NIG distribution. The NIG distribution $\mathrm{NIG}(\alpha,\beta,\mu,\delta)$ is specified by four parameters, where $\mu$ is the location, $\beta$ indicates the skewness, $\delta$ measures the scale, and $\alpha$ controls the steepness and also affects the tail behavior. The density function of the NIG distribution is given by 
\begin{equation}\label{eq:NIGdensity}
f_{\mathrm{NIG}}(x;\alpha,\beta,\mu,\delta)=\frac{\alpha\delta}{\pi}\exp\left(\delta\sqrt{\alpha^2-\beta^2} + \beta\left( x-\mu \right)  \right) \frac{K_1\left( \alpha s(x)\right) }{s(x)},
\end{equation}
where $x,\mu\in \mathbb{R}$, $0\le |\beta| \le \alpha$,  $\delta>0$, $K_1(x)$ denotes the modified Bessel function of the third kind of order $1$, and 
\begin{equation*}
s(x)=\sqrt{\delta^2+(x-\mu)^2}.
\end{equation*}
The moment generating function of the NIG distribution is given by
\begin{equation}\label{eq:mgfNIG}
M_{\mathrm{NIG}}(u)=\exp\left( \delta\sqrt{\alpha^2-\beta^2} - \delta\sqrt{\alpha^2-(\beta+u)^2} +\mu u \right).
\end{equation}
Note that NIG distribution is closed under convolution, i.e.,
\begin{equation*}\label{convolution}
f_{\mathrm{NIG}}(x;\alpha,\beta,\mu_1,\delta_1)\ast f_{\mathrm{NIG}}(x;\alpha,\beta,\mu_2,\delta_2)=f_{\mathrm{NIG}}(x;\alpha,\beta,\mu_1+\mu_2,\delta_1+\delta_2).
\end{equation*}

As pointed out in \cite{tank:2003} among others, there exists multiple martingale measures that lead to different no-arbitrage prices of a financial derivative.
Following \cite{imai:tan:2009}, we will price options under an equivalent martingale measure determined by the Esscher transform of \cite{gerb:shiu:1994}. Under the Esscher equivalent martingale measure, the NIG distribution turns out to be  $\mathrm{NIG}(\alpha,\beta+\theta,\mu,\delta)$, where the parameter $\theta$ is a solution of the following equation:
\begin{equation}\label{eq:esscher}
r=\log \frac{M_{\mathrm{NIG}}(\theta+1)}{M_{\mathrm{NIG}}(\theta)},
\end{equation}
and $r$ is the riskless rate. The solution of Equation~\eqref{eq:esscher} can be obtained explicitly by  using \eqref{eq:mgfNIG}. One the parameter $\theta$ is identified, one can simulate the asset prices via $$S_i=S_0\exp(x_1+\dots+x_i),$$
where $$x_i=L(t_i)-L(t_{i-1})\simiid \mathrm{NIG}(\alpha,\beta+\theta,\mu\Delta t,\delta\Delta t),$$ and $\Delta t =T/m$.
Note that the closed form of the inverse CDF of the NIG distribution is not available. To circumvent this, we resort to  the numerical inversion method proposed by \cite{horm:2003}; see \cite{imai:tan:2009} for the details on simulating general GH models with QMC. We emphasize that the numerical inversion approximation procedure only takes place in the initial setup. The setup time and the precision of the numerical inversion method were reported and discussed in \cite{imai:tan:2009}.

In the numerical experiments,  the parameters of the NIG distribution are  
\begin{equation}\label{eq:pars}
\alpha = 105.96,\  \beta = -26.15,\  \mu= 360\times 0.00348,\  \delta= 360\times 0.0112,
\end{equation}
which are excerpted from \cite{eber:prau:1998} with annualized $\mu$ and $\delta$ (assume there are 360 days in one year). The four parameters were estimated from the daily returns of the DAX for the period December 15, 1993 to November 26, 1997. We choose the riskless interest rate $r=0.04$ again. These parameters were also chosen in \cite{imai:tan:2009}.
The parameter required for the Esscher transform is then $\theta\doteq-4.87$ by solving Equation \eqref{eq:esscher}. Figure~\ref{fig:density} compares the estimated kernel smoothed density of one million i.i.d. samples of $\mathrm{NIG}(\alpha,\beta+\theta,\mu,\delta)$ generated by the numerical inversion method of \cite{horm:2003} with the true density of $\mathrm{NIG}(\alpha,\beta+\theta,\mu,\delta)$. We observe that the two densities are almost the same. This clearly supports the high accuracy of the numerical inversion method.

\begin{figure}[ht]
	\includegraphics[width=\hsize]{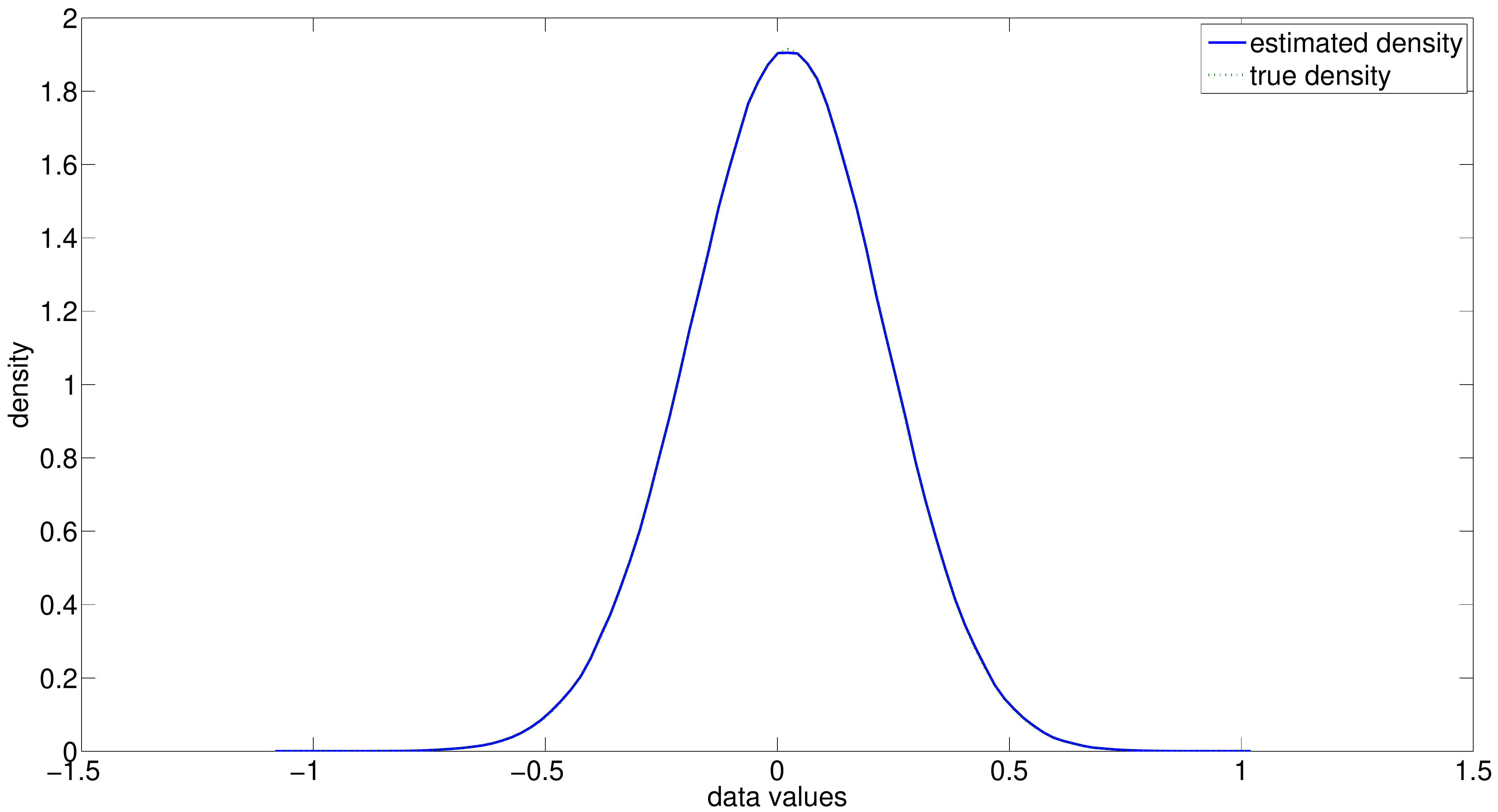}
	\caption{The dashed line plots the true density of  $\mathrm{NIG}(\alpha,\beta+\theta,\mu,\delta)$ using \eqref{eq:NIGdensity}, where the parameters are given in \eqref{eq:pars}. The solid line plots the estimated kernel smoothed density of one million i.i.d. samples of $\mathrm{NIG}(\alpha,\beta+\theta,\mu,\delta)$ generated by the numerical inversion method of \cite{horm:2003}. The two curves are almost the same so that it is hard to distinguish the difference. \label{fig:density}}
\end{figure}

We perform numerical experiments for pricing binary options (Example \ref{ex:1}) and down-and-out barrier options (Example \ref{ex:3}) and estimating the delta of the arithmetic Asian call option by the pathwise method (Example \ref{ex:2}) to illustrate   the effectiveness of the proposed method under the exponential NIG model.  We use the first-order Taylor approximations of the asset prices $S_i$ to get matching functions  for the QR and MQR 　methods. Table~\ref{tab:nig} compares the variance reduction factors for the four QMC-based methods. The effects of our proposed method on effective dimension reduction can be assessed by the characteristics reported in Table~\ref{tab:efNIG}.  Again, we observe that the  sQMC-II method outperforms all the other QMC-based methods under the exponential NIG framework. The advantage of QMC methods declines when the dimension becomes large. The gain  is moderate for Example \ref{ex:3} (pricing down-and-out barrier options), similar to the Black-Scholes and the Heston models.  As claimed in the main paper, the payoff of the barrier option \eqref{gamma} involves $m$ discontinuity structures, which is the most difficult function in our examples.   Our proposed method can also be applied to other exponential GH models in a similar way, such as the exponential hyperbolic model.

\begin{table}[htbp]
	\centering
	\caption{Variance reduction factors for the exponential NIG model}
	\begin{tabular}{ccrrrr}
		\toprule
		$m$     & Cases  & \multicolumn{1}{l}{QMC-I} & \multicolumn{1}{l}{QMC-II} & \multicolumn{1}{l}{sQMC-I} & \multicolumn{1}{l}{sQMC-II} \\\hline
		\multirow{3}[0]{*}{16} & Ex. 1 & 5     & 79    & 42    & 135625  \\
		& Ex. 2 & 7     & 97    & 51    & 145368  \\
		& Ex. 3 & 19    & 24    & 45    & 206   \\\hline
		\multirow{3}[0]{*}{64} & Ex. 1 & 4     & 31    & 6     & 927  \\
		& Ex. 2 & 5     & 38    & 8     & 1116  \\
		& Ex. 3 & 3     & 22    & 3     & 30  \\\bottomrule
	\end{tabular}
	\begin{tablenotes}
		\item \footnotesize \emph{Note}.  The parameters for the NIG distribution are given in \eqref{eq:pars}, and the remaining parameters are $S_0=100$, $r=0.04$, $K=100$, $\kappa=90$, $T=1$. The results are based on a sample
		size $N=2^{14}$ and  $100$ replications. ``Ex."  stands for ``Example". \label{tab:nig}
	\end{tablenotes}
	
\end{table}

\begin{table}
	\centering
	\caption{Effective dimension-related characteristics under the exponential NIG framework\label{tab:efNIG}}\vspace{0.3cm}
	\begin{tabular}{rrrrrrrrrrrr}
		\toprule
		\multicolumn{1}{c}{\multirow{2}[0]{*}{Cases}} 	& \multicolumn{5}{c}{sQMC-I}            & \multicolumn{5}{c}{sQMC-II} \\\cline{2-6}\cline{8-12}
		& $R_{\{1\}}$ & $R_{\{1,2\}}$ & $R_{(1)}$ &     $d_t$ &  $d_{\mathrm{ms}}$ & & $R_{\{1\}}$ & $R_{\{1,2\}}$ & $R_{(1)}$ &     $d_t$ &  $d_{\mathrm{ms}}$\\\hline
		
		Ex. 1 & 15.43 & 29.1  & 84.7  & 13    & 1.38  &       & 99.80 & 99.84 & 99.90 & 1     & 1.00 \\
		Ex. 2 & 0.32  & 16.3  & 88.4  & 14    & 1.28  &       & 0.32  & 99.82 & 99.65 & 2     & 1.00 \\
		Ex. 3 & 2.39  & 9.7   & 69.9  & 16    & 1.38  &       & 2.37  & 95.20 & 94.07 & 14    & 1.08 \\
		\bottomrule
	\end{tabular}%
	\begin{tablenotes}
		\item \footnotesize \emph{Note}. $R_{\{1\}}$, $R_{\{1,2\}}$ and $R_{(1)}$ are reported in percentage, $m=d = 16$.
	\end{tablenotes}
\end{table}

\section{Extension to Heston Model}\label{sec:hestonModel}
Under the Heston framework \citep[][]{heston:1993}, the risk-neutral dynamics of the asset can be expressed as
\begin{align*}
\frac{\mrd S(t)}{S(t)} &= rdt+\sqrt{V(t)} \mrd W_1(t),\\
\mrd V(t) &= (\theta-V(t))\nu \mrd t+\sigma \sqrt{V(t)} \mrd W_2(t),
\end{align*} 
where $\nu$ is the mean-reversion parameter of the volatility process $V(t)$, $\theta$ is the long run average price variance, $\sigma$ is the volatility of the volatility, and $W_1(t)$ and $W_2(t)$ are two standard Brownian motions with an instantaneous correlation $\rho$, i.e., $\mathrm{Cov}(W_1(s),W_2(t))=\rho\min(s,t),$ for any $s,t>0$. 
One may write that $W_1(t)=\hat{\rho}B_1(t)+\rho B_2(t)$ and $W_2(t)=B_2(t)$,
where $\hat{\rho}=\sqrt{1-\rho^2}$, $B_1(t)$ and $B_2(t)$ are two independent standard Brownian motions.

We use the Euler-Maruyama scheme to discretize the asset paths in log-space \citep[][]{acht:2013}, resulting in
\begin{equation}\label{eq:discHeston}
\begin{split}
\log(S_i) &= \log(S_{i-1})+(r-V_{i-1}/2)\Delta t+\sqrt{V_{i-1}}\sqrt{\Delta t}(\hat{\rho}z_i^1+\rho z_i^2) ,\\
V_i &= V_{i-1}+(\theta-V_{i-1})\nu \Delta t+\sigma \sqrt{V_{i-1}} \sqrt{\Delta t}z_i^2,
\end{split}
\end{equation} 
where $z_i^1$ and $z_i^2$ ($i=1,\dots,m$) are independent standard normals,  $V_i$ represents the approximation of $V(t_i)$ for $i=1,\dots,m$ and $V_0$ is the initial value of the volatility process. We now let $\bm{z}=(z_1^1, z_1^2,z_2^1, z_2^2,\dots,z_m^1, z_m^2)^\top\sim N(\bm{0},\bm{I}_{2m}).$  
From \eqref{eq:discHeston}, we have
\begin{equation*}\label{eq:hestonEqs}
\begin{split}
S_i &= S_0\exp\left\lbrace  ri\Delta t-\Delta t\sum_{j=0}^{i-1}V_j/2+\sqrt{\Delta t}\sum_{j=0}^{i-1}\sqrt{V_j} \hat{\rho}z_{2j+1}+\rho z_{2j+2})\right\rbrace, \\
V_i &=V_0+ \theta \nu i\Delta t-\nu\Delta t \sum_{j=0}^{i-1}V_j+\sigma \sqrt{\Delta t}\sum_{j=0}^{i-1}\sqrt{V_j} z_{2j+2}.
\end{split}
\end{equation*}
Note that all $V_i$ do not depend on $z_1$, we can therefore rewrite $S_i$ as
\begin{equation}\label{eq:newSAHeston}  
S_i = \exp\left(\sqrt{(1-\rho^2)V_0\Delta t}z_1\right)\zeta_i(\bm{z}_{2{:}d}),
\end{equation}
where the dimension $d=2m$ and
\begin{equation*}
\zeta_i(\bm{z}_{2{:}d}) = S_0\exp\left\lbrace ri\Delta t-\Delta t\sum_{j=0}^{i-1}V_j/2+\rho\sqrt{\Delta t V_0} z_2+\sqrt{\Delta t}\sum_{j=1}^{i-1}\sqrt{V_j} (\hat{\rho}z_{2j+1}+\rho x_{2j+2})\right\rbrace.
\end{equation*}

Here we only verify the variable separation condition for  the average function $q_{\mathrm{A}}(\bm S)=S_{\mathrm{A}}-\kappa$. The analysis for the component function and the extreme function is similar. Using $\bm{z}=\Phi^{-1}(\bm u)$, it follows from  \eqref{eq:newSAHeston} that
\begin{align*}
S_{\mathrm{A}} =\frac 1m\sum_{i=1}^m S_i=\frac{\exp\left(\sqrt{(1-\rho^2)V_0\Delta t}\Phi^{-1}(u_1)\right)}{m}\sum_{i=1}^m \zeta_i(\Phi^{-1}(\bm{u}_{2{:}d})).
\end{align*}
We thus have the following equivalence
\begin{equation*}
\{q_{\mathrm{A}}(\bm S)>0\}\Leftrightarrow\{S_{\mathrm{A}}-\kappa>0\}\Leftrightarrow
\{\hat{\gamma}(\bm{u}_{2{:}d})<u_1<1\},
\end{equation*}
where
\begin{equation*}
\hat{\gamma}(\bm{u}_{2{:}d}):=\Phi\left(\frac{1}{\sqrt{(1-\rho^2)V_0\Delta t}}\left(\log(\kappa  m)-\log\left(\sum_{i=1}^m \zeta_i(\Phi^{-1}(\bm{u}_{2{:}d}))\right)\right)\right).
\end{equation*}
This implies that the variable
separation condition is satisfied for the average function. The representation \eqref{eq:newSAHeston} guarantees the applicability of the MQR method, since the variable
separation condition still holds if replacing $\bm{z}$ with $\bm{Uz}$ in \eqref{eq:newSAHeston}, where $\bm U$ has the form \eqref{UMatrix}.

We perform numerical experiments for Examples \ref{ex:1}--\ref{ex:3} to illustrate the flexibility and the effectiveness of the proposed method under the Heston framework.  We use the first-order Taylor approximations of the asset prices $S_i$ to get matching functions for the QR and MQR methods.

In our experiments, we choose $
	S_0=100,\ V_0=\theta=\sigma=0.2,\ T=1,\ \nu=1,K=100,\ m\in\{16,64\},$ and $\rho=\pm 0.5$.
Again, the nominal dimension is $d=2m\in\{32,128\}$. Table~\ref{tab:numerHeston} presents the comparison of VRFs for Examples~\ref{ex:1} -- \ref{ex:3}. Note that the computational costs of the QMC methods are quite close as in the Black-Scholes model. So it is fair to take the VRF as a measure of the quality of the QMC methods. Table~\ref{tab:efHeston} presents the effective dimension-related characteristics for $d=32$ and $\rho = 0.5$. 
For both examples, the sQMC-II method
	can further improve the efficiency of the QMC-II method by a large factor. Table~\ref{tab:efHeston} shows that the sQMC-II method yields the larger degree of additivity $R_{(1)}$, the smaller truncation dimension $d_t$ and the mean dimension
	$d_{\mathrm{ms}}$.  These demonstrates the good effect of the proposed method. Similar to the Black-Scholes model, we get  smaller VRFs for Example~\ref{ex:3}. 

\begin{table}[htbp]
	\centering
	\caption{Variance reduction factors for the Heston model}
    \vspace{0.3cm}
	\begin{tabular}{rrrrrr}
		\toprule
		\multicolumn{1}{c}{Cases} & \multicolumn{1}{c}{$\rho$} & \multicolumn{1}{l}{QMC-I} & \multicolumn{1}{l}{QMC-II} & \multicolumn{1}{l}{sQMC-I} & \multicolumn{1}{l}{sQMC-II} \\\hline
		\multicolumn{6}{c}{$m=16$} \\
		\multirow{2}[0]{*}{Ex. 1} & 0.5   & 3     & 23    & 11    & 1187  \\
		& $-0.5$  & 3     & 26    & 10    & 754  \\\hline
		\multirow{2}[0]{*}{Ex. 2} & 0.5   & 5     & 39    & 21    & 1418  \\
		& $-0.5$  & 6     & 45    & 19    & 863  \\\hline
		\multirow{2}[0]{*}{Ex. 3} & 0.5   & 4     & 7     & 8     & 33  \\
		& $-0.5$  & 7     & 14    & 19    & 93  \\\hline
		\multicolumn{6}{c}{$m=64$} \\
		\multirow{2}[0]{*}{Ex. 1} & 0.5   & 3     & 23    & 5     & 149  \\
		& $-0.5$  & 3     & 18    & 5     & 105  \\\hline
		\multirow{2}[0]{*}{Ex. 2} & 0.5   & 4     & 37    & 7     & 195  \\
		& $-0.5$  & 5     & 29    & 8     & 144  \\\hline
		\multirow{2}[0]{*}{Ex. 3} & 0.5   & 2     & 5     & 2     & 15  \\
		& $-0.5$  & 3     & 7     & 4     & 14  \\
		
		\bottomrule
	\end{tabular}
	\label{tab:numerHeston}%
\end{table}%
\begin{table}[htbp]
	\centering
	\caption{Effective dimension-related characteristics under the Heston framework\label{tab:efHeston}}\vspace{0.3cm}
	\begin{tabular}{rrrrrrrrrrrr}
		\toprule
		\multicolumn{1}{c}{\multirow{2}[0]{*}{Cases}} 	& \multicolumn{5}{c}{sQMC-I}            & \multicolumn{5}{c}{sQMC-II} \\\cline{2-6}\cline{8-12}
		& $R_{\{1\}}$ & $R_{\{1,2\}}$ & $R_{(1)}$ &     $d_t$ &  $d_{\mathrm{ms}}$ & & $R_{\{1\}}$ & $R_{\{1,2\}}$ & $R_{(1)}$ &     $d_t$ &  $d_{\mathrm{ms}}$\\\hline
		Ex. 1 & 3.89  & 14.8  & 81.7  & 26    & 1.54  &       & 99.14 & 99.16 & 99.20 & 1     & 1.01 \\\hline
		Ex. 2 & 0.72  & 4.9   & 86.4  & 27    & 1.30  &       & 0.71  & 99.14 & 98.31 & 2     & 1.02 \\\hline
		Ex. 3 & 0.70  & 3.3   & 47.6  & 32    & 1.74  &       & 0.70  & 75.79 & 79.28 & 32    & 1.28 \\
		\bottomrule
	\end{tabular}%
	\begin{tablenotes}
		\item \footnotesize \emph{Note}. $R_{\{1\}}$, $R_{\{1,2\}}$ and $R_{(1)}$ are reported in percentage, $\rho = 0.5$ and $d=2m=32$.
	\end{tablenotes}
\end{table}%

\section{Conclusions}\label{SectionCon}
High dimensionality and discontinuities are challenges for QMC since they can dramatically degrade the
performance of  QMC. Developing methods to deal with both the high dimensionality and discontinuities is of considerable practical importance. Dealing only with a single aspect may lead to unsatisfactory results. Some PGMs have been proposed to realign discontinuity structures, resulting in QMC-friendly discontinuities, but discontinuities are still presented in the resulting function. It is known that QMC could possess a superior asymptotic convergence rate for smooth functions. Motivated by this, we developed the VPO  method aiming at removing the discontinuities completely and thus improving the smoothness of the functions. 

However, smoothness is not the only factor that affects the performance of QMC. The effective dimensions of the integrands can be large if we use PGMs naively. We therefore proposed the MQR method that can be compatible with the
smoothing method and has the potential to reduce the effective dimension by concentrating the variance on the first few variables. Numerical experiments showed that the MQR method in combination with the VPO smoothing method provides a consistent advantage over other methods.
This combination leads to the smallest effective dimension in several numerical examples, explaining the superiority of the combined procedure.

\section*{Appendix: Effective Dimension}
The effective dimension is an important measure of the complexity of QMC integration for smooth functions. Any square integrable function $h(\bm{u})$ on $(0,1)^d$ has an ANOVA decomposition \citep[][]{efron:stein:1981} as a sum
$h(\bm{u}) = \sum_{v\subseteq \{1,\dots,d\}}h_v(\bm{u}),$
where $h_v(\bm{u})$ depends on $\bm{u}$  through $u_j$ for $j\in v$. The ANOVA terms are defined recursively by
$h_v(\bm{u})=\int_{(0,1)^{d-\abs{v}}} h(\bm{u})\mrd \bm{u}_{-v}-\sum_{w\subsetneq v}h_w(\bm{u}),$
where $\abs{v}$ is the cardinality of $v$ and $\bm{u}_{-v}$ denotes the vector of the coordinates of $\bm{u}$ with indices not in $v$.
When $v=\varnothing$, we use the convention $h_{\varnothing}(\bm{u})=\int_{(0,1)^d}h(\bm{u})\mrd \bm{u}$. It is easy to see that the ANOVA
decomposition is orthogonal: $\int h_v(\bm{u})h_w(\bm{u})\mrd \bm{u}=0$ whenever $v\neq w$. This property ensures that
$\sigma^2(h)=\sum_{v\subseteq \{1,\dots,d\}}\sigma^2(h_v),$
where $\sigma^2(h)$ and $\sigma^2(h_v)$ are the variances of $h(\bm{u})$ and $h_v(\bm{u})$, respectively. This decomposition of variance serves to define various effective dimension-related characteristics.

Let $\ell\in\{1,\dots,d\}$ and $v\subseteq \{1,\dots,\ell\}$. The superposition variance
ratio captured by all order-$\ell$ ANOVA terms is defined by
$R_{(\ell)}(h)=\frac1{\sigma^2(h)}\sum_{\abs{v}=\ell} \sigma^2(h_v).$ The truncation variance ratio captured by all the ANOVA terms $h_v$
is defined as
$R_{\{1,\dots,\ell\}}(h)=\sum_{v\subseteq \{1,\dots,\ell\}} \sigma^2(h_v)/\sigma^2(h).$ 
The two variance ratios defined above offer two different ways of measuring  the importance of variables to the function $h$.
If $R_{\{1,\dots,\ell\}}(h)\approx 1$, we may say that $h$ depends mainly on the first $\ell$ variables.
The superposition variance ratio of the first order $R_{(1)}$ measures the degree of additivity of $h$.
If $R_{(1)}(h)=1$, then the function $h$ is additive; if $R_{(1)}(h)\approx 1$, then $h$ is highly additive.

The concepts of effective dimension are proposed by \cite{Caflisch1997}. Let $p$ be
a parameter close to $1$ (we
choose $p =0.99$). The effective dimension
in the truncation sense of $h$ is the smallest integer
$d_t$ such that $R_{\{1,\dots,d_t\}}(h)\geq p$. The effective dimension
in the superposition sense of $h$ is the smallest integer
$d_s$ such that $\sum_{\ell=1}^{d_s}R_{(\ell)}(h)\geq p$. 
The 
mean dimension proposed by \cite{owen:2003} is defined
as $d_{\mathrm{ms}}=\sum_{\ell=1}^d\ell R_{(\ell)}$. 
Some effective
dimension-related characteristics  can be computed numerically \citep[][]{kuch:2011,liu:owen:2006, sobol:2005, sobol2005b, Wang2003}.

\section*{Acknowledgments}
Zhijian He was supported by the National Science Foundation of
China under grant 71601189.
Xiaoqun Wang was supported by the National Science Foundation of
China under
grant 71471100.

\end{document}